\newcommand{\N}{\mathbb{N}}
\newcommand{\Z}{\mathbb{Z}}
\newcommand{\R}{\mathbb{R}}
\newtheorem{thm}{Theorem}[section]
\newtheorem{prop}[thm]{Proposition}
\newtheorem{assume}[thm]{Assumption}
\newtheorem{lem}[thm]{Lemma}
\newtheorem{rem}[thm]{Remark}
\newtheorem{cor}[thm]{Corollary}
\newtheorem{conj}[thm]{Conjecture}
\newtheorem{ex}[thm]{Example}
\begin{document}
\title[Accumulation points of Coxeter groups]
{Distribution of accumulation points of roots for type $(n-1,1)$ Coxeter groups}
\author[A. Higashitani]{Akihiro Higashitani}
\author[R. Mineyama]{Ryosuke Mineyama}
\author[N. Nakashima]{Norihiro Nakashima}
\thanks{
\hspace{-0.5cm}{\bf 2010 Mathematics Subject Classification:} 
Primary 20F55, 51F15; Secondary 05E15. \\
{\bf Keywords:} Coxeter group, roots, accumulation points. \\
The first author is supported by JSPS Research Fellowship for Young Scientists. }
\address{Akihiro Higashitani,
Department of Mathematics, Kyoto Sangyo University, 
Kamigamo Motoyama, Kita-ku, Kyoto 603-8555, Japan}
\email{ahigashi@cc.kyoto-su.ac.jp}
\address{Ryosuke Mineyama, 
Department of Mathematics, 
Graduate School of Science,
Osaka University,
Toyonaka, Osaka 560-0043, Japan}
\email{r-mineyama@cr.math.sci.osaka-u.ac.jp}
\address{Norihiro Nakashima, 
Department of Mathematics, Tokyo Denki University, Tokyo 120-8551, Japan}
\email{nakashima@mail.dendai.ac.jp}

\begin{abstract}
In this paper, we investigate the set of accumulation points of 
normalized roots of infinite Coxeter groups for certain class of their action. 
Concretely, we prove the conjecture proposed in \cite[Section 3.2]{hlr} 
in the case where the equipped Coxeter matrices are of type $(n-1,1)$, where $n$ is the rank. 
Moreover, we obtain that the set of such accumulation points 
coincides with the closure of the orbit of one point of normalized limit roots. 
In addition, in order to prove our main results, 
we also investigate some properties on fixed points of the action. 
\end{abstract}

\maketitle

\section{Introduction}

The theory of Coxeter groups has been developed from 
not only combinatorial but also geometrical aspects. 
One of the most fundamental and important objects associated with Coxeter groups 
is root systems. 
In the case of a finite Coxeter group, which is nothing but a finite reflection group, 
its roots correspond to normal vectors of hyperplanes defining Euclidean reflections. 
In the case of an infinite Coxeter group, if it is an affine reflection group, 
which is a small class of infinite Coxeter groups, 
then its roots also correspond to normal vectors. 
However, little investigation on roots has been done for the case of general infinite Coxeter groups. 
This paper is devoted to analyzing roots of infinite Coxeter groups 
whose associated bilinear forms have the signature $(n-1,1)$. 
Concretely, we prove Conjecture \ref{yosou} below for all of such Coxeter groups. 

Hohlweg, Labb\'e and Ripoll proved that accumulation points of roots of infinite Coxeter groups 
lie in the projected isotropic cone $\widehat{Q}$ (\cite[Theorem 2.7]{hlr}). 
In addition, they conjectured in \cite[Section 3.2]{hlr} that 
the distribution of such points can be described as some appropriate set of points. 
From geometrical viewpoints, 
as is well known in the theory of discrete groups of M\"obius transformation, 
to study accumulation points is nothing but to study the interaction between ergodic theory and discrete groups.
That has rich geometrical aspects and the theory stands as a well developed branch of mathematical researches. 
In order to establish that theory, the hyperbolicity of the space plays a crucial role.
For the case where the associated matrices have signature $(n-1,1)$, 
Coxeter groups also have some hyperbolicity. 
This leads us to inspect an analogue of the theory of 
Kleinian groups for Coxeter groups of such class.

Recall that $W$ is a {\em Coxeter group} of rank $n$ with the generating set $S$ 
if $W$ is generated by the set $S=\{s_1, \ldots, s_n\}$ subject only to the relations 
$(s_is_j)^{m_{ij}}=1$, where $m_{ij} \in \Z_{>1} \cup \{\infty\}$ for $1 \leq i < j \leq n$ 
and $m_{ii}=1$ for $1 \leq i \leq n$. Thus, $m_{ij}=m_{ji}$. 
We say that the pair $(W,S)$ is a {\em Coxeter system}. 
We refer the reader to \cite{Hum90} for the introduction to Coxeter groups. 

For a Coxeter system $(W,S)$ of rank $n$, 
let $V$ be a real vector space with its orthonormal basis $\Delta=\{\alpha_s : s \in S\}$. 
Note that by identifying $V$ with $\R^n$, we treat $V$ as a Euclidean space. 
We define a symmetric bilinear form on $V$ by setting 
\begin{align*}
B(\alpha_i, \alpha_j) \; 
\begin{cases}
=-\cos\left(\frac{\pi}{m_{ij}}\right) \;\;\;&\text{if } m_{ij}< \infty, \\
\leq -1 &\text{if }m_{ij}=\infty 
\end{cases}
\end{align*}
for $1 \leq i \leq j \leq n$, where $\alpha_{s_i}=\alpha_i$, 
and call the associated matrix $B$ the {\em Coxeter matrix}. 
Classically, $B(\alpha_i, \alpha_j)=-1$ if $m_{ij}=\infty$, 
but throughout this paper, we allow its value to be any real number less than or equal to $-1$. 
This definition is derived from \cite{hlr} and this is available in some situations. 
Given $\alpha \in V$ such that $B(\alpha, \alpha) \not=0$, $s_\alpha$ denotes 
the map $s_\alpha : V \to V$ by
$$s_\alpha(v)=v- 2 \frac{B(\alpha,v)}{B(\alpha, \alpha)}\alpha \;\;\;\text{for any } v \in V,$$ 
which is said to be a {\em $B$-reflection}. 
Then $\Delta$ satisfies that 
\begin{itemize}
\item[(i)] for all $\alpha, \beta \in \Delta$ with $\alpha \not= \beta$, one has 
$$B(\alpha, \beta) \in (-\infty, -1] \cup \left\{ - \cos\left(\frac{\pi}{k}\right) : k \in \Z_{>1} \right\};$$ 
\item[(ii)] for all $\alpha \in \Delta$, one has $B(\alpha, \alpha)=1$. 
\end{itemize}
Such a set $\Delta$ is called a {\em simple system} and its elements are {\em simple roots} of $W$.
The Coxeter group $W$ acts on $V$ as composition of $B$-reflections and 
its generating set $S$ is identified with $\{s_{\alpha} : \alpha \in \Delta\}$. 
The {\em root system} $\Phi$ of $W$ is defined to be the orbit of $\Delta$ 
under the action of $W$ and its elements are called {\em roots} of $W$. 
The pair $(\Phi, \Delta)$ is said to be a {\em based root system} in $(V,B)$. 
We mention that $\Delta$ in \cite[Definition 1.2]{hlr} is assumed to be positively independent, 
while we assume the linearly independence throughout this paper.

\bigskip

Our main interest is the distribution of accumulation points of roots of an infinite Coxeter group. 
In the case of a finite Coxeter group, its root system $\Phi$ is finite. 
When a Coxeter group is infinite, $\Phi$ is also infinite. 
Thus the classical tools developed in the Euclidean geometry are no longer usable. 

On the other hand, in a recent paper \cite{hlr}, some tools to deal with roots of infinite Coxeter groups 
were established as the first step of their study. 
Our motivation to organize this paper is to contribute further studies of the paper \cite{hlr}. 

As is known in \cite[Theorem 2.7 (i)]{hlr}, 
the norm of a positive root always diverges as its depth tends to infinity. 
Thus, in order to investigate asymptotical behaviors of positive roots, 
it is needed to normalize them in the sense of a function $| \cdot |_1$, 
which will be defined in Section \ref{junbi}. We also set an affine subspace $V_1=\{x \in V : |x|_1=1\}$. 

Let $$\widehat{Q}=\{x \in V_1 : B(x,x)=0\}$$ 
and let $E$ be a set of accumulation points of normalized roots $\widehat{\rho}$ for $\rho \in \Phi$, 
i.e., the set consisting of all the possible limits of injective sequences of normalized roots. 
Let $w \cdot x$ denote the normalized action on $V_1$ for $w \in W$ and $x \in V_1$. 
(See Section \ref{junbi}.) 
It was proved in \cite[Theorem 2.7]{hlr} that $E \subset \widehat{Q}$ and the following is proposed. 
\begin{conj}[{\cite[Conjecture 3.9]{hlr}}]\label{yosou}
We say that $\Delta_I \subset \Delta$ is generating if $\widehat{Q} \cap \text{{\em span}}(\Delta_I)$ 
is included in $\text{{\em conv}}(\widehat{\Delta}) \cap \text{{\em span}}(\Delta_I)$. 
Let $E_I \subset E$ be the set of accumulation points of normalized roots of the parabolic subgroup associated with $\Delta_I$. 
Then we have the following properties: 
\begin{itemize}
\item[(i)] if $\Delta_I$ is generating, 
then $E_I=\widehat{Q} \cap \text{{\em span}}(\Delta_I)$; 
\item[(ii)] the set $E$ is the topological closure of the fractal self-similar subset $F_0$ 
of $\widehat{Q}$ defined by 
$$F_0:=W \cdot \left( \bigcup_{\substack{\Delta_I \subset \Delta \\
\Delta_I \text{{\em  is generating}}}} \widehat{Q} \cap \text{{\em span}}(\Delta_I) \right).$$ 
\end{itemize}
\end{conj}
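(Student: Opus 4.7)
The plan is to exploit that signature $(n-1,1)$ makes $(V,B)$ a Lorentzian space, so that $\widehat{Q}\cap V_1$ can be identified with the boundary sphere of hyperbolic $(n-1)$-space $\mathbb{H}^{n-1}$, and the normalized $W$-action on $V_1$ restricts to an action on $\mathbb{H}^{n-1}$ by isometries. Under this dictionary, $E$ becomes exactly the \emph{limit set} of $W$ in the classical Kleinian sense, and both parts of Conjecture~\ref{yosou} translate into statements about limit sets of discrete reflection subgroups. As a first step I would classify each $w\in W$ by the eigenvalues of its matrix with respect to $B$: $w$ is \emph{loxodromic} when it has a real eigenvalue of modulus greater than one, in which case it has exactly two fixed points in $\widehat{Q}\cap V_1$, an attractor and a repeller. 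A short argument then shows that every such fixed point lies in $E$, and that $E$ is closed and $W$-invariant.

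For part (i), if $\Delta_I\subset\Delta$ is generating, I would first argue that the restricted form $B|_{\text{span}(\Delta_I)}$ must be Lorentzian of signature $(|\Delta_I|-1,1)$: otherwise $\widehat{Q}\cap\text{span}(\Delta_I)$ would be empty, a single point, or otherwise fail to be contained in $\text{conv}(\widehat{\Delta})\cap\text{span}(\Delta_I)$. The parabolic subgroup $W_I$ therefore acts discretely on a lower-dimensional hyperbolic space, and the generating hypothesis should supply enough independent reflections to make the action non-elementary. A classical minimality argument — density of loxodromic fixed points in the limit set of any non-elementary discrete group of hyperbolic isometries — then identifies $E_I$ with all of $\widehat{Q}\cap\text{span}(\Delta_I)$.

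For part (ii), the inclusion $\overline{F_0}\subseteq E$ is immediate because $E$ is closed, $W$-invariant, and contains each $E_I$. For the reverse inclusion I would first exhibit at least one generating $\Delta_I$ with $\widehat{Q}\cap\text{span}(\Delta_I)\neq\emptyset$; by part (i) this produces a nonempty closed subset of $E$, and hence the closure of its $W$-orbit is a nonempty closed $W$-invariant subset of $E$. Minimality of the $W$-action on the limit set then forces this subset to equal all of $E$, giving $E\subseteq\overline{F_0}$. This also yields the advertised description of $E$ as the closure of the $W$-orbit of a single normalized limit root.

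The principal obstacle is twofold. First, one must pin down the precise convex-geometric content of ``generating'': it must simultaneously force the restricted bilinear form to be Lorentzian and $W_I$ to be non-elementary, and both implications demand a careful study of how $\widehat{Q}$ sits relative to $\text{conv}(\widehat{\Delta})$, likely requiring auxiliary information about fixed points of the action. Second, although the Kleinian-group dictionary is suggestive, $W$ need not act cocompactly or even with finite covolume on $\mathbb{H}^{n-1}$, so the minimality of the action on the limit set cannot be quoted verbatim; it must be re-established in the present combinatorial reflection setting, most likely by constructing loxodromic elements as products of simple reflections whose attracting fixed points can be made to accumulate near any prescribed point of $E$. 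This last step is where I expect the bulk of the technical work to lie.
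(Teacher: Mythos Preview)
Your overall architecture mirrors the paper's: both derive part~(ii) from part~(i) together with minimality of $E$ as a closed $W$-invariant set (the paper's Proposition~\ref{keyprop}, giving $E=\overline{W\cdot x}$ for any $x\in E$, i.e.\ Theorem~\ref{kidou}). Your Kleinian dictionary is the right intuition, and the paper's introduction explicitly invokes it.

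There is, however, a genuine gap in your part~(i). You claim that ``density of loxodromic fixed points in the limit set of any non-elementary discrete group of hyperbolic isometries'' identifies $E_I$ with all of $\widehat{Q}\cap\mathrm{span}(\Delta_I)$. But that density statement only says $E_I$ equals the closure of its loxodromic fixed points; it does \emph{not} force $E_I$ to be the entire boundary sphere. Non-elementary Kleinian groups routinely have limit sets that are proper Cantor-like subsets of the sphere---and this is exactly what happens in case~(b) of Theorem~\ref{mein}, where $\widehat{Q}\not\subset\mathrm{conv}(\widehat{\Delta})$ and $E\subsetneq\widehat{Q}$. The generating hypothesis buys you far more than non-elementarity: it says the boundary sphere lies inside the projective simplex, which in Kleinian language means the fundamental chamber has no ideal portion escaping $\overline{\mathbb{H}^{m-1}}$. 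Converting this into $E_I=\widehat{Q}\cap\mathrm{span}(\Delta_I)$ is the actual content of the result and is precisely Theorem~\ref{mein}(a). The paper's proof of that case is not short: it constructs a metric $d_B$ on $\widehat{Q}$ comparable to the Euclidean one, shows each $s_\alpha$ strictly expands $d_B$ on its visible region, extracts a uniform expansion constant $T>1$ from the hypothesis $\widehat{Q}\subset\mathrm{int}(\mathrm{conv}(\widehat{\Delta}))$ (Lemma~\ref{hani}, Proposition~\ref{T}), and iterates to squeeze every point of $\widehat{Q}$ into $E$.

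You have also inverted the balance of difficulty. Minimality---your announced ``principal obstacle''---is dispatched in the paper in under a page: given any nonempty closed $W$-invariant $K\subset\widehat{Q}$, the set of lines through pairs of points of $K$ is shown to contain every $\widehat{\alpha}\in\widehat{\Delta}$, hence all normalized roots, hence $E\subset K$. The substantive work, which your proposal does not address, is establishing $E=\widehat{Q}$ under the generating hypothesis.
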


In this paper, we prove the following theorem. 
\begin{thm}\label{mein}
For an infinite Coxeter group of rank $n$ equipped with the signature $(n-1,1)$ bilinear form, we have the following: 
\begin{itemize}
\item[(a)] When $\widehat{Q} \subset \mathrm{conv}(\widehat{\Delta})$, we have $E=\widehat{Q}$. 
\item[(b)] When $\widehat{Q} \not\subset \mathrm{conv}(\widehat{\Delta})$, 
we have $E=\widehat{Q} \setminus \left(\bigcup_{i=1}^m W \cdot D_i \right)$, where 
$D_1, \ldots, D_m$ are connected components of $\widehat{Q}$ out of $\mathrm{conv}(\widehat{\Delta})$ 
with $1 \leq m \leq n$. 
\end{itemize}
\end{thm}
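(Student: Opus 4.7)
The plan is to exploit the fact that signature $(n-1,1)$ turns $\{x \in V_1 : B(x,x) \le 0\}$ into a closed ball whose interior is a Klein model of hyperbolic $(n-1)$-space with ideal boundary $\widehat{Q} \cong S^{n-2}$, on which $W$ acts as a discrete group of hyperbolic isometries. Under this identification, $E$ plays the role of the classical limit set and $\bigcup_i W \cdot D_i$ that of the classical domain of discontinuity, and the proof splits into two inclusions plus a combinatorial bound.

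For the easy direction $E \subseteq \widehat{Q} \setminus \bigcup_i W \cdot D_i$, I would begin by recalling that every positive root $\rho$ is a nonnegative combination of simple roots, so its normalization $\widehat{\rho}$ is a convex combination of $\widehat{\Delta}$. Hence every accumulation point of normalized positive roots lies in $\widehat{Q} \cap \overline{\mathrm{conv}(\widehat{\Delta})} = \widehat{Q} \setminus \bigcup_i D_i$. Combined with the $W$-invariance of $E$ (which follows from continuity of the normalized action wherever it is defined), this upgrades to $E \subset \widehat{Q} \setminus \bigcup_i W \cdot D_i$, since if $x \in E$ lay in some $w \cdot D_i$ then $w^{-1} \cdot x \in E \cap D_i = \emptyset$. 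Case (a) is just the degenerate instance $m = 0$. The bound $m \le n$ is combinatorial: the simplex $\mathrm{conv}(\widehat{\Delta})$ has $n$ facets, and the sphere $\widehat{Q}$ can bulge out through at most one cap per facet.

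The hard direction is $\widehat{Q} \setminus \bigcup_i W \cdot D_i \subseteq E$, and here I would follow the orbit-closure strategy flagged in the abstract. Concretely: fix any $x_0 \in E$ (such a point exists since $W$ is infinite; one can realize it as the attracting fixed point of an infinite-order element, obtained as a limit $w^k \cdot \widehat{\alpha}$ for a simple root $\widehat{\alpha}$ not fixed by $w$), conclude $W \cdot x_0 \subseteq E$ by invariance, and then prove $\overline{W \cdot x_0} \supseteq \widehat{Q} \setminus \bigcup_i W \cdot D_i$. The density step is the crux: in signature $(n-1,1)$, whenever $W$ is infinite one can find two non-commuting infinite-order elements whose boundary fixed-point pairs on $\widehat{Q}$ are disjoint, and a ping-pong / Schottky argument then produces a free subgroup whose set of attracting fixed points is dense in the candidate limit set, forcing every $W$-orbit meeting that set to be dense.

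The main obstacle I expect is precisely this density/minimality claim, together with the implicit closedness of $\widehat{Q} \setminus \bigcup_i W \cdot D_i$. For the latter, one must show that $\bigcup_i W \cdot D_i$ is open, equivalently that $W$ acts properly discontinuously on it; this should be verifiable by examining which $W$-translates of simple-mirror hyperplanes can accumulate inside each $D_i$, exploiting the fact that $D_i$ lies strictly outside $\mathrm{conv}(\widehat{\Delta})$ and is separated from it by a single facet. For the density itself, one needs to rule out the degenerate scenarios where $W$ acts elementarily on $\widehat{Q}$ (fixing a single point or a single pair), which in signature $(n-1,1)$ fails as soon as $W$ is infinite of rank $\ge 3$; the rank-$2$ case has to be treated separately by a direct argument on the hyperbolic line. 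The subtle part is likely to be tying together, via a careful study of the fixed points of $W$-elements, the three a priori distinct descriptions of $E$: as the accumulation set, as the closure of a single orbit, and as $\widehat{Q}$ minus the translates of the exterior caps $D_i$.
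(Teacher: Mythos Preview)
Your Kleinian-group strategy is a genuinely different route from the paper's. While the introduction mentions the analogy with M\"obius groups, the paper never invokes limit sets or domains of discontinuity as black boxes. Instead it builds a bespoke metric $d_B$ on $\widehat{Q}$ from $B$ and proves that each simple reflection strictly expands $d_B$-length on its visible area (Proposition~\ref{hodai2}). For case~(a) with $\widehat{Q} \subset \mathrm{int}(\mathrm{conv}(\widehat{\Delta}))$ it extracts a uniform constant $T>1$ with $|B(s_\alpha \cdot x, s_\alpha \cdot y)| > T\,|B(x,y)|$ whenever $x$ lies well inside $V_\alpha$ (Proposition~\ref{T}), then for arbitrary $x \in \widehat{Q}$ iteratively chooses reflections to manufacture $y_0 \in E$ with $|B(x,y_0)| \le M/T^m$. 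For case~(b) it argues by \emph{induction on the rank}: each $\partial D_i$ sits in facets $\mathrm{span}(\Delta_I)$, the parabolic $W_I$ still has signature $(|I|-1,1)$ or positive type (Remark~\ref{daiji}), and the inductive hypothesis feeds into Propositions~\ref{coco1} and~\ref{coco2}. Minimality of $E$ (Proposition~\ref{keyprop}) is obtained not by ping-pong but by a short combinatorial trick: any nonempty closed $W$-invariant $K \subset \widehat{Q}$ contains, via Lemma~\ref{kotei}, points moved by every $s_\alpha$, so the lines through pairs in $K$ pass through every simple root, hence $\widehat{\Phi}$ lies in that $W$-invariant family of lines and $E \subset K$. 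Your approach would be shorter if one is willing to import the Kleinian machinery; the paper's buys a self-contained argument with explicit constants and no appeal to geometric finiteness.

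There is, however, a genuine gap in your density step. Ping-pong on two loxodromics with disjoint fixed pairs produces a free subgroup whose attracting fixed points are dense in \emph{its own} limit set, and minimality then gives density of any $W$-orbit in $\Lambda(W)$; but nothing you have written shows $\Lambda(W) \supseteq \widehat{Q} \setminus \bigcup_i W \cdot D_i$. That containment is exactly the content of the theorem in case~(b), and your argument as stated is circular: you need to know in advance that the ``candidate limit set'' really is all of the complement. Closing this along Kleinian lines requires a fundamental-domain argument identifying the domain of discontinuity \emph{exactly} with $\bigcup_i W \cdot D_i$ (not merely containing it), i.e.\ a geometric-finiteness statement about the chamber cut out by the walls $B(\,\cdot\,,\alpha_i)=0$. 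You also silently conflate $E$, defined as accumulation of orbits of normalized roots lying \emph{outside} the ball $\{B(x,x)\le 0\}$, with $\Lambda(W)$, defined via interior orbits; equating them is precisely what minimality of $E$ delivers, and that step is not supplied by your sketch. The paper's induction via parabolics is its substitute for the missing fundamental-domain analysis, and is where the real work in case~(b) lives.
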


Moreover, we also prove the following theorem. 

\begin{thm}\label{kidou}
Fix $x \in E$. Then $$\overline{W \cdot x} = E.$$ 
\end{thm}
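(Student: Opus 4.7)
The plan is to prove the two inclusions $\overline{W \cdot x} \subseteq E$ and $E \subseteq \overline{W \cdot x}$ separately. The first inclusion is essentially formal: the set $E$ is closed, being the derived set of the countable set $\widehat{\Phi}$ of normalized roots, and it is $W$-invariant because the normalized action of any $w \in W$ on $V_1$ is continuous and permutes normalized roots (as $\Phi$ itself is $W$-stable). Indeed, if $\widehat{\rho}_k \to x$ for an injective sequence of normalized roots, then $w \cdot \widehat{\rho}_k \to w \cdot x$ is again such a sequence, so $w \cdot x \in E$. Hence $W \cdot x \subseteq E$, and taking closures yields $\overline{W \cdot x} \subseteq E$.

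The main work is the reverse inclusion, which is a minimality statement analogous to the minimality of the limit set of a non-elementary discrete subgroup of $\mathrm{Isom}(\mathbb{H}^{n-1})$. Under the signature $(n-1,1)$ hypothesis, $W$ acts by Lorentz transformations on $V$, and $\widehat{Q}$ plays the role of the sphere at infinity of $\mathbb{H}^{n-1}$. The plan is to exploit north--south dynamics of loxodromic elements of $W$: an element $g \in W$ whose spectral radius (viewed as a linear map on $V$) is strictly greater than $1$ has two distinguished isotropic eigendirections, giving rise to an attracting fixed point $y_g^+ \in \widehat{Q}$ and a repelling fixed point $y_g^- \in \widehat{Q}$ for the normalized action on $V_1$; moreover, for every $z \in V_1 \setminus \{y_g^-\}$ the iterates $g^n \cdot z$ converge to $y_g^+$.

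Given a target $y \in E$ and a sequence of roots $\rho_k \in \Phi$ with $\widehat{\rho}_k \to y$, write $\rho_k = w_k(\alpha_{j_k})$. I would construct loxodromic elements $g_k \in W$ whose attracting fixed points $y_{g_k}^+$ converge to $y$. A natural candidate is to take products of two conjugate reflections, such as $g_{k,\ell} = (w_k s_{j_k} w_k^{-1}) \cdot (w_\ell s_{j_\ell} w_\ell^{-1})$ for distinct large $k, \ell$; in the $(n-1,1)$ setting, the two reflection hyperplanes are nearly tangent to $\widehat{Q}$ at points close to $y$, forcing the product to be loxodromic with attracting direction near $y$. This construction leans on the fixed-point analysis that the paper has already developed (as advertised in the abstract). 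Once such $g_k$ are available, the iterates $g_k^{n_k} \cdot x$ for suitable $n_k \to \infty$ provide elements of $W \cdot x$ converging to $y$, provided $x \neq y_{g_k}^-$; this coincidence can be avoided by first replacing $x$ with a suitable $w \cdot x \in W \cdot x \subseteq E$, or by varying the choice of $\ell$.

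The main obstacle is making the north--south convergence quantitative. One must verify that the attracting fixed points of the constructed $g_{k,\ell}$ genuinely approximate $y$ and that the contraction rates grow fast enough to absorb any error coming from the initial position of $x$ relative to $y_{g_k}^-$. Here I expect to use the divergence of $\|\rho_k\|$ as the depth tends to infinity (\cite[Theorem~2.7(i)]{hlr}), together with the explicit description of $E$ established in Theorem \ref{mein} and a Cartan-type decomposition of Lorentz transformations in signature $(n-1,1)$, to control the spectral behavior of $g_{k,\ell}$ and to locate their attracting fixed points. The fixed-point machinery highlighted in the abstract is precisely the ingredient needed to turn these heuristics into a clean proof.
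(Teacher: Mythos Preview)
Your first inclusion is fine and matches the paper. The second, however, takes a genuinely different route, and as written it has a logical gap.

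The paper does not use north--south dynamics at all. Instead it proves a minimality statement (Proposition~\ref{keyprop}): for any nonempty $W$-invariant closed $K\subset\widehat{Q}$, one has $E\subset K$. The trick is purely incidence-geometric. One forms the set $K'$ of all lines through distinct pairs of points of $K$ (restricted suitably), checks that $\overline{K'}\cap\widehat{Q}\subset K$, and then observes that for each $\alpha\in\Delta$ there is some $x_\alpha\in K$ with $s_\alpha\cdot x_\alpha\neq x_\alpha$ (Lemma~\ref{kotei}, using irreducibility), so that the line $L(x_\alpha,s_\alpha\cdot x_\alpha)$ passes through $\widehat{\alpha}$. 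Hence $\widehat{\Delta}\subset K'$, whence $\widehat{\Phi}\subset K'$ by $W$-invariance, and taking accumulation points gives $E\subset K$. Applying this with $K=\overline{W\cdot x}$ finishes the proof. No spectral analysis, no Theorem~\ref{mein}, no explicit loxodromics are needed.

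Your proposal, by contrast, is the classical Kleinian-group argument. It is plausible in spirit, but two concrete problems arise. First, you invoke ``the explicit description of $E$ established in Theorem~\ref{mein}'' to control the attracting points and contraction rates; in the paper the logical order is the reverse---Proposition~\ref{keyprop} (hence Theorem~\ref{kidou}) is proved first and is then \emph{used} in the proof of Theorem~\ref{mein} (see Propositions~\ref{coco1} and~\ref{coco2}). So your argument as stated is circular. Second, the claim that $g_{k,\ell}=(w_k s_{j_k} w_k^{-1})(w_\ell s_{j_\ell} w_\ell^{-1})$ is loxodromic with attractor near $y$ is not automatic: a product of two reflections in roots $\rho_k,\rho_\ell$ is loxodromic only when $|B(\widehat{\rho}_k,\widehat{\rho}_\ell)|>1$ (suitably interpreted), and two normalized roots both close to the same point $y\in\widehat{Q}$ have $B(\widehat{\rho}_k,\widehat{\rho}_\ell)$ close to $0$, not large; you would need to separate $k$ and $\ell$ carefully and then argue that the attractor still lands near $y$, which is delicate. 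The paper's line-through-$\widehat{\alpha}$ argument sidesteps all of this.
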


We remark that Theorem \ref{mein} (a) and Theorem \ref{kidou} imply Conjecture \ref{yosou} 
in the case of Coxeter matrices whose signatures are $(n-1,1)$. For more details, see Remark \ref{kiwotukeru}. 

\begin{rem}\label{chuui}{\em 
It is easy to calculate that each Coxeter matrix arising from a Coxeter group of rank 3 
is either positive type or has the signature $(2,1)$ (cf. \cite[Section 6.7]{Hum90}). 
However, for a general Coxeter group of rank $n$, there exists a bilinear form whose signature is 
neither positive type nor  $(n-1,1)$. See Example \ref{example}. 
}\end{rem}

\begin{rem}{\em 
In \cite{dhr}, while revising the previous version of this paper, 
Dyer, Hohlweg and Ripoll also proved Theorem \ref{mein} and Theorem \ref{kidou} 
by a different approach (\cite[Theorem 4.10 (a) and Theorem 3.1 (b)]{dhr}). 
In fact, their approach was accomplished by using a method of so-called {\em imaginary cones} 
and they do not assume the linear independence of $\Delta$. 
On the other hand, in this paper, some other aspects of infinite Coxeter groups 
(e.g. metric on $\widehat{Q}$) are investigated. 
}\end{rem}

A brief overview of this paper is as follows. 
First, we will prepare some lemmas and collect fundamental facts in Section \ref{junbi} 
for the proofs of the main theorems. 
Next, in Section \ref{onepoint}, we will prove Theorem \ref{kidou}. 
We also study the fixed poitns of the normalized action in Section \ref{koteiten}. 
Before proving Theorem \ref{mein} in general case, 
we will show Theorem \ref{mein} for the case of rank 3 in Section \ref{syoumei}. 
Finally, in Section \ref{higher}, we will prove Theorem \ref{mein} for the case of an arbitrary rank. 
The discussion of the fixed points of the normalized action is used in the proof of Theorem \ref{mein} for the case of rank 3.

\subsection*{Acknowledgements} 
The authors would like to be grateful to Yohei Komori and Hideki Miyachi 
for their helpful advices and instructive discussions on Theorem \ref{mein} 
and also thank Yuriko Umemoto for having useful seminars on Coxeter groups together, 
which are the origin of this work. 
The authors also would like to express a lot of thanks to Matthew Dyer, Christophe Hohlweg and Vivien Ripoll 
for giving several helpful comments and fruitful suggestions on this paper. 
The authors also would like to thank to the anonymous referee for a lot of helpful comments.


\section{The normalized action and a metric on $\widehat{Q}$}\label{junbi}
In this section, we prepare some notation and lemmas for proving Theorem \ref{mein} and \ref{kidou}. 
After defining $\widehat{Q}$, 
we collect some fundamental results on the normalized action on $\widehat{Q}$ 
and define a metric on $\widehat{Q}$.

\begin{assume}
Unless otherwise noted, 
we always assume that Coxeter groups are irreducible 
and their Coxeter matrices have the signature $(n-1,1)$, where $n$ is the rank. 
\end{assume}
By \cite[Proposition 2.14]{hlr}, 
if the based root system $(\Phi, \Delta)$ is reducible and we consider proper subsets 
$\Delta_I, \Delta_J \subsetneq \Delta$ such that $\Delta = \Delta_I \sqcup \Delta_J$ 
with $B(\alpha,\beta)=0$ for all $\alpha \in \Delta_I$ and $\beta \in \Delta_J$, 
then $E(\Phi)=E(\Phi_I) \sqcup E(\Phi_J)$. 
Hence we may restrict our study to the irreducible cases.

As the following example shows, 
there exists a Coxeter group whose Coxeter matrix does not have the signature $(n-1,1)$. 

\begin{ex}\label{example}{\em 
Let $W$ be a Coxeter system of rank 4 with $S=\{s_1,s_2,s_3,s_4\}$ 
and $\Delta=\{\alpha_1,\alpha_2,\alpha_3,\alpha_4\}$. Let 
\begin{align*}
&B(\alpha_1,\alpha_2)=-a, \; B(\alpha_2,\alpha_3)=-b, \; B(\alpha_3,\alpha_4)=-c, \\
&B(\alpha_1,\alpha_3)=B(\alpha_1,\alpha_4)=B(\alpha_2,\alpha_4)=0, 
\end{align*}
where $a, b, c \in \{ \cos\left(\frac{\pi}{k}\right) : k \in \Z_{>2}\} \cup [1, \infty)$. 
It then follows from an easy computation that the signature of $B$ is $(2,2)$ 
if and only if $B$ is not positive type and 
three positive real numbers $a,b,c$ satisfy $a^2 + b^2 + c^2 - a^2c^2 < 1$. 
(Consult, e.g., \cite{Hum90} for the classification of positive type.) 
For example, when $(a,b,c)=(2,\frac{1}{2},2)$, this condition is satisfied. 

Thus, in the case of rank 4, there exists an infinite Coxeter group 
whose associated bilinear form has its signature $(2,2)$, 
while each Coxeter group of rank 3 is either positive type or of type $(2,1)$. 
}\end{ex}

Let 
\begin{align*}
	V^+ := 
		\{\ v \in V : v=\sum_{i=1}^n v_i\alpha_i, v_i \geq 0\}\;\text{ and }\; 
	V^- := 
		\{\ v \in V : v=\sum_{i=1}^n v_i\alpha_i, v_i \leq 0\}. 
\end{align*}
It is known that 
based root system allows us to define positive roots $\Phi^+ := \Phi \cap V^+$,
and then $\Phi = \Phi^+ \sqcup (-\Phi^+)$
(see, for instance, \cite{BD10,Kra09}).
In other words, all the roots are contained in $V^+ \cup V^-$.

\subsection{The normalized action of $W$}

First of all, we define $\widehat{Q}$ and discuss the action of $W$ on it. Let $$Q=\{v \in V : B(v,v)=0\}.$$ 

We fix the vector $o \in V$ as follows. 
If $B$ is of positive type, then $o=\sum_{i=1}^n \alpha_i$. 
If $B$ has the signature $(n-1,1)$, then $o$ is the eigenvector corresponding to the negative eigenvalue of $B$ 
whose Euclidean norm equals to $1$. 

By the following lemma, even in the case where $B$ has the signature $(n-1,1)$, 
we may assume that all coordinates of $o$ are positive. 
\begin{lem}\label{heimen}
	Let $o$ be an eigenvector for the negative eigenvalue of $B$.
	Then all coordinates of $o$ with respect to a basis $\Delta$ have the same sign.
\end{lem}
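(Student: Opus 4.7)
The plan is to reduce the lemma to a direct application of the Perron--Frobenius theorem. The key idea is that, once one passes from the Gram matrix $M = (B(\alpha_i,\alpha_j))_{1 \leq i,j \leq n}$ to the matrix $N := 2I - M$, one obtains a symmetric matrix with non-negative entries whose off-diagonal support encodes the Coxeter graph of $W$, and whose spectral data are a trivial affine transformation of those of $M$.

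First I would record the structural properties of $M$ that come directly from the definition of a simple system: $M_{ii} = 1$ for every $i$ and $M_{ij} \leq 0$ for $i \neq j$. Hence $N$ has non-negative entries, with $N_{ii} = 1$ and $N_{ij} = -M_{ij} \geq 0$ for $i \neq j$. Since the non-zero off-diagonal entries of $N$ are exactly the pairs $\{i,j\}$ with $m_{ij} \geq 3$ or $m_{ij} = \infty$, the support graph of $N$ coincides with the Coxeter graph of $W$. The standing assumption that $W$ is irreducible is, by definition, the statement that this graph is connected, so $N$ is a non-negative irreducible matrix in the Perron--Frobenius sense.

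Next I would match the spectra. The eigenvalues of $N$ are exactly $\{2 - \lambda : \lambda \in \mathrm{Spec}(M)\}$, and since $B$ has signature $(n-1,1)$ there is a unique negative eigenvalue $\lambda_1$ of $M$, with the remaining $\lambda_2,\dots,\lambda_n > 0$. Consequently $2 - \lambda_1 > 2 > 2 - \lambda_j$ for every $j \geq 2$, so $2-\lambda_1$ is strictly the largest eigenvalue of $N$. The Perron--Frobenius theorem for non-negative irreducible matrices then guarantees that this eigenvalue is simple and that its eigenspace is spanned by a vector with strictly positive entries. Because the $(2-\lambda_1)$-eigenspace of $N$ coincides with the $\lambda_1$-eigenspace of $M$, the vector $o$ must be a scalar multiple of a strictly positive vector, and so all of its coordinates with respect to $\Delta$ share a common sign.

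The only step that requires real care, and the one I expect to be the main obstacle to state cleanly, is the translation of ``irreducibility of the Coxeter group $W$'' into ``irreducibility of $N$ in the Perron--Frobenius sense''. It is a routine bookkeeping matter once one invokes the equivalence $m_{ij} = 2 \iff B(\alpha_i,\alpha_j) = 0$, but it is precisely where the irreducibility hypothesis enters; without it, $N$ would block-decompose and the negative eigenvalue of $M$ could be supported on a proper subset of coordinates, so the positivity of $o$ would fail.
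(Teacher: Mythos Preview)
Your proof is correct and follows essentially the same route as the paper's. Both arguments pass from the Gram matrix to a shifted matrix with nonnegative entries and apply the Perron--Frobenius theorem for irreducible nonnegative matrices; the only cosmetic difference is that the paper shifts by $I$ (working with $I-B$, whose diagonal vanishes) whereas you shift by $2I$. Your discussion of why irreducibility of $W$ translates into irreducibility of $N$ is more explicit than the paper's, which simply asserts that $I-B$ is irreducible and nonnegative.
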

\begin{proof}
	This follows from Perron-Frobenius theorem for irreducible nonnegative matrices. 
	Let $I$ be the identity matrix of rank $n$. 
	Then $I-B$ is irreducible and nonnegative.
	Note that since $I-B$ and $B$ are symmetric, all eigenvalues are real.
	By Perron-Frobenius theorem, we have a positive eigenvalue $\lambda'$ of $I-B$
	such that $\lambda'$ is the maximum of eigenvalues of $I-B$ and 
	each entry of the corresponding eigenvector $u$ is positive.
	On the other hand, for each eigenvalue $a$ of $B$, there exists an eigenvalue $b$ of $I-B$
	such that $a = 1-b$.
	Let $\lambda$ be the negative eigenvalue of $B$.
	It then follows from an easy calculation that $\lambda = 1-\lambda'$. 
	Hence, $\R u = \R o$. Therefore, the positivity of each entry of $u$ implies that 
        the entries of $o$ are all positive or all negative. 
\end{proof}

Hence if we write $o$ for a linear combination $o = \sum_{i=1}^n o_i \alpha_i$ of $\Delta$, then $o_i > 0$ for each $i$.
Given $v \in V$, we define $$|v|_1=\sum_{i=1}^no_iv_i$$ if $v = \sum_{i=1}^n v_i \alpha_i$. 
Note that $|v|_1$ is nothing but the Euclidean inner product of $v$ and $o$. 
It is obvious that $|v|_1>0$ for $v \in V^+ \setminus \{0\}$ and $|v|_1<0$ for $v \in V^- \setminus \{0\}$. 
In particular, $|\alpha|_1 > 0$ for $\alpha \in \Delta$. 
Let $V_i=\{ v \in V\ \vert\ |v|_1=i\}$, where $i=0,1$. For $v \in V \setminus V_0$, 
we write $\widehat{v}$ for the ``normalized'' vector $\frac{v}{|v|_1} \in V_1$. 
Also for a set $A \subset V \setminus V_0$, 
we write $\widehat{A}$ for the set of all $\widehat{a}$ with $a \in A$.
We notice that since $B(x,\alpha) = |\alpha|_1B(x,\widehat{\alpha})$ holds, 
the sign of $B(x,\widehat{\alpha})$ is equal to the sign of $B(x,\alpha)$ for any $x \in V$ and $\alpha \in \Delta$.

As noted before, $W$ acts on $V$ as composition of $B$-reflections. 
For analyzing asymptotical aspects of $W$, we consider another action of $W$ on $V$. 
The {\em normalized action} of $w \in W$ on $V_1 \setminus W(V_0)$ is given by 
$$
	w \cdot v := \widehat{w(v)},\ \  v \in V,
$$
where $w(v)$ denotes the action of $w$ defined before 
(composition of $B$-reflections). 
This action is well-defined on $V_1 \setminus W(V_0)$.

\begin{lem}\label{zerodake}
We have $$W(V_0) \cap Q = \{{\bf 0}\},$$ where {\bf 0} is the origin of $\R^n$. 
\end{lem}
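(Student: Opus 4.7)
The plan is to reduce the claim to the single statement that $V_0 \cap Q = \{\mathbf{0}\}$, and then invoke the $W$-invariance of $Q$.

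First I would observe that $V_0$ is precisely the $B$-orthogonal complement of $o$. Indeed, $o$ is an eigenvector of the symmetric matrix $B$ for the negative eigenvalue $\lambda$, so for any $v \in V$ one has $B(v,o) = \lambda\,\langle v,o\rangle_{\mathrm{Eucl}}$, and by the definition of $|v|_1$ as the Euclidean inner product of $v$ with $o$, it follows that
\[
V_0 \;=\; \{v \in V : \langle v,o\rangle = 0\} \;=\; \{v \in V : B(v,o) = 0\} \;=\; o^{\perp_B}.
\]
In particular we have an orthogonal decomposition $V = \mathbb{R} o \oplus_B V_0$ with respect to $B$.

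Next I would argue that $B$ restricted to $V_0$ is positive definite. This is the crux of the argument: since $B$ has global signature $(n-1,1)$ and $B|_{\mathbb{R} o}$ has signature $(0,1)$ (because $B(o,o) = \lambda\|o\|^2 < 0$), the Sylvester-type additivity of signatures on a $B$-orthogonal direct sum forces $B|_{V_0}$ to have signature $(n-1,0)$. Consequently, if $v \in V_0$ satisfies $B(v,v) = 0$, then $v = \mathbf{0}$; that is, $V_0 \cap Q = \{\mathbf{0}\}$.

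Finally, I would extend this from $V_0$ to $W(V_0)$. Every $w \in W$ is a composition of $B$-reflections and hence a $B$-isometry, which implies $w(Q) = Q$. Therefore
\[
w(V_0) \cap Q \;=\; w(V_0) \cap w(Q) \;=\; w(V_0 \cap Q) \;=\; w(\{\mathbf{0}\}) \;=\; \{\mathbf{0}\},
\]
and taking the union over $w \in W$ gives $W(V_0) \cap Q = \{\mathbf{0}\}$. I expect no real obstacle here beyond cleanly justifying the signature computation in the second step; the remainder is a direct consequence of $o$ being a $B$-eigenvector and of $W$ acting by $B$-isometries.
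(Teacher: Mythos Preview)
Your proof is correct and follows essentially the same approach as the paper: reduce to $V_0 \cap Q = \{\mathbf{0}\}$ via the $W$-invariance of $Q$, then show that $B$ is positive definite on $V_0$ using the fact that $o$ spans the negative-eigenvalue direction. The only cosmetic difference is that the paper carries this out by explicitly diagonalizing $B$ in the eigenbasis $p_1,\ldots,p_{n-1},o$ and computing $B(v,v)=\sum_i \lambda_i v_i^2$, whereas you phrase the same computation as a Sylvester-type signature count on the $B$-orthogonal decomposition $V=\mathbb{R}o \oplus_B V_0$.
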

\begin{proof}
Since $Q$ is $W$-invariant, it is enough to show that $V_0 \cap Q= \{{\bf 0}\}$. 

For $i = 1,\ldots,n-1$, 
let $p_i$ be an eigenvector of $B$ with Euclidean norm 1 corresponding to each positive eigenvalue $\lambda_i$, respectively. 
Then, for any $v \in V_0$, we can express $v$ by a linear combination 
$v = \sum_{i=1}^{n-1} v_i p_i + v_n o$ for some $v_1,\ldots,v_n \in \R$ with respect to an orthonormal basis $p_1,\ldots,p_{n-1},o$ for $V$. 
Since $|v|_1=0$, we have $v_n=0$. Moreover, since $B$ is positive-definite 
in the subspace of $V$ spanned by $p_1,\ldots,p_{n-1}$, we have $B(v,v) = \sum_{i=1}^{n-1} \lambda_i v_i^2 \ge 0$. 
The positivity of each $\lambda_i$ implies that $B(v,v) = 0$ if and only if $v={\bf 0}$. 
This proves that $V_0 \cap Q = \{{\bf 0}\}$, as required. 
\end{proof}
Thus, $Q \setminus \{{\bf 0}\}$ is contained in $V \setminus W(V_0)$. 
This is also true in the case where $B$ is of positive type. 
Let $$\widehat{Q}:=\widehat{Q \setminus \{{\bf 0}\}}.$$ 
Then $\widehat{Q}$ coincides with the set $\{x \in V_1 : B(x,x)=0\}$, which has already appeared in Introduction. 
Lemma \ref{zerodake} above shows that the normalized action is also well-defined 
on $\widehat{Q} \subset V_1 \setminus W(V_0)$ everywhere. 

Moreover, we also see that $$\Delta \cap W(V_0) = \emptyset.$$ 
In fact, for any $\rho \in V^+ \cup V^-$, 
if the Euclidean inner product of $\rho$ and $o$, which coincides with $|\rho|_1$, is equal to 0, 
then $\rho$ should be {\bf 0} by Lemma \ref{heimen}. 
Since the root system $W(\Delta)$ is contained in $V^+ \cup V^-$ (see \cite[Remark 1.3]{hlr}), 
we obtain that $W(\Delta) \cap V_0=\emptyset$, i.e., $\Delta \cap W(V_0)=\emptyset$. 
This is also obvious in the case where $B$ is of positive type. 

Let $\widehat{Q_-}:=\{x \in V_1 : B(x,x) < 0\}$. 
Then, we note that the boundary of $\widehat{Q_-}$ with respect to 
the subspace topology on $V_1$ coincides with $\widehat{Q}$. 
Since $B$ is a symmetric bilinear form, we can diagonalize it 
by an orthogonal transformation $L$. Here we assume that $L o =\alpha_n$. 
Then we see that ${}^tL B L$ is equal to the diagonal matrix $(\lambda_1,\ldots,\lambda_{n-1},-\lambda_n)$, 
denoted by $A$, where $\lambda_1,\ldots,\lambda_{n-1},-\lambda_n$ are the eigenvalues of $B$ with $\lambda_i > 0$ and 
${}^t M$ means the transpose of a matrix $M$. 
Consider a basis $L^{-1} \Delta=\{\beta_1,\ldots,\beta_n\}$. 
Then $\widehat{Q_-}=\{\sum_{i=1}^nv_i\beta_i \in V: \lambda_1v_1^2+\cdots+\lambda_{n-1}v_{n-1}^2-\lambda_nv_n^2<0\} \cap V_1$. 
From the definition of $V_1$, 
we conclude that $\widehat{Q_-}$ is an ellipsoid and $\widehat{Q}$ is its boundary.

\subsection{Visibility on $\widehat{Q}$}

Next, we recall a valuable notion ``visibility'' from \cite[Section 3]{hlr} 
and discuss the visible points on $\widehat{Q}$. 

Let $L(x,y)$ (resp. $L[x,y]$) denote the {\it line} through $x$ and $y$ 
(resp. the {\it segment} joining $x$ and $y$). 
Using this, we define a valuable idea given in \cite{hlr}. 
That is, we say that $x \in \widehat{Q}$ is {\it visible} from $y \in V_1$ 
if $L[x,y] \cap \widehat{Q} = \{x\}$. 
Given $y \in V_1$, we call a curve consisting of visible points from $y$ in $\widehat{Q}$ a {\it visible curve from $y$}. 
If there is no confusion, then we simply call it a visible curve.

The set of all visible points of $\widehat{Q}$ 
from a normalized simple root $\widehat{\alpha}$ is said to be a {\em visible area} from $\widehat{\alpha}$, 
denoted by $V_\alpha$. 

We recall the following proposition concerning with the notion ``visible''. 
\begin{lem}[{\cite{hlr}}]\label{visibility}
Let $x \in \widehat{Q}$ and $\alpha \in \Delta$. 
\begin{itemize}
\item[(i)] $x \in V_\alpha$ if and only if $B(\alpha,x) \geq 0$. 
\item[(ii)] $x$ and $s_\alpha \cdot x$ lie on the same line $L(x,\alpha)$. 
\item[(iii)] $x \in \partial V_\alpha$ if and only if $B(\alpha,x)=0$. 
\end{itemize}
\end{lem}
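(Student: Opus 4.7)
The plan is to prove the three parts in the order (ii), (i), (iii), since (ii) is a direct identity, (i) is a one-variable quadratic intersection analysis, and (iii) is an immediate topological corollary of (i).

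For (ii), I would unpack the definition of the $B$-reflection. Because $B(\alpha,\alpha)=1$ for $\alpha\in\Delta$, one has $s_\alpha(x)=x-2B(\alpha,x)\alpha$, which already shows $s_\alpha(x)\in x+\R\alpha$. Writing $\alpha=|\alpha|_1\widehat{\alpha}$ and using $|x|_1=1$ together with the linearity of $|\cdot|_1$ gives $|s_\alpha(x)|_1=1-2B(\alpha,x)|\alpha|_1$, hence after normalization
\begin{equation*}
s_\alpha\cdot x \;=\; \frac{1}{1-2B(\alpha,x)|\alpha|_1}\,x \;-\; \frac{2B(\alpha,x)|\alpha|_1}{1-2B(\alpha,x)|\alpha|_1}\,\widehat{\alpha},
\end{equation*}
which is visibly an affine combination of $x$ and $\widehat{\alpha}$. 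Thus $x$, $\widehat{\alpha}$ and $s_\alpha\cdot x$ lie on a common line, which is (ii).

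For (i), I would parametrize the segment $L[x,\widehat{\alpha}]$ by $\gamma(t)=(1-t)x+t\widehat{\alpha}$, $t\in[0,1]$, and expand $B(\gamma(t),\gamma(t))$ using bilinearity. Since $B(x,x)=0$, this collapses to
\begin{equation*}
B(\gamma(t),\gamma(t)) \;=\; t\bigl[\,2(1-t)B(x,\widehat{\alpha}) + t\,B(\widehat{\alpha},\widehat{\alpha})\,\bigr],
\end{equation*}
so the non-trivial zero is $t^\ast=2B(x,\widehat{\alpha})/\bigl(2B(x,\widehat{\alpha})-B(\widehat{\alpha},\widehat{\alpha})\bigr)$. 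Because $B(\widehat{\alpha},\widehat{\alpha})=1/|\alpha|_1^2>0$, a three-case analysis on the sign of $B(x,\widehat{\alpha})$ shows that $t^\ast\in(0,1)$ exactly when $B(x,\widehat{\alpha})<0$, while $t^\ast\le 0$ or $t^\ast>1$ when $B(x,\widehat{\alpha})\ge 0$. Since $|\alpha|_1>0$ by Lemma \ref{heimen}, $B(x,\widehat{\alpha})$ and $B(x,\alpha)$ have the same sign, so $L[x,\widehat{\alpha}]\cap\widehat{Q}=\{x\}$ if and only if $B(\alpha,x)\ge 0$, which is (i).

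Part (iii) now follows by soft arguments: (i) identifies $V_\alpha$ with the closed set $\{x\in\widehat{Q}:B(\alpha,x)\ge 0\}$, and continuity of $x\mapsto B(\alpha,x)$ together with the fact that $\widehat{Q}$ is the boundary of the open ellipsoid $\widehat{Q_-}$ in $V_1$ implies that the topological boundary of $V_\alpha$ taken inside $\widehat{Q}$ is exactly $\{x\in\widehat{Q}:B(\alpha,x)=0\}$. The calculations are all routine, so the only obstacle worth flagging is the sign bookkeeping in the quadratic of (i) (remembering that $B(\widehat{\alpha},\widehat{\alpha})>0$ is what forces $t^\ast$ out of $(0,1]$ when $B(x,\alpha)\ge 0$), and the mild subtlety in (iii) that the boundary is taken inside $\widehat{Q}$ rather than inside $V_1$, in which $V_\alpha$ would have empty interior and the statement would be vacuous.
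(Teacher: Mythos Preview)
Your argument is correct. The paper does not supply a self-contained proof here: for (i) and (ii) it simply cites \cite[Proposition 3.5(i) and Proposition 3.7(i)]{hlr}, noting only that the change of normalization $|\cdot|_1$ is harmless because $B(x,\alpha)=|\alpha|_1 B(x,\widehat{\alpha})$ with $|\alpha|_1>0$; for (iii) it says exactly what you say, namely that it follows from (i) and the continuity of $B(\alpha,\cdot)$ on $\widehat{Q}$.

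Your direct route---the explicit affine combination of $x$ and $\widehat{\alpha}$ for (ii), and the one-variable quadratic $B(\gamma(t),\gamma(t))=t\bigl[2(1-t)B(x,\widehat{\alpha})+tB(\widehat{\alpha},\widehat{\alpha})\bigr]$ for (i)---is precisely the kind of computation the cited source carries out, so there is no genuine divergence in method. One cosmetic point: the statement writes $L(x,\alpha)$ rather than $L(x,\widehat{\alpha})$; since $s_\alpha(x)\in x+\R\alpha$ and normalization keeps you on the affine line in $V_1$ through $x$ and $\widehat{\alpha}$, both readings are equivalent, and your computation covers it. Your flag about taking $\partial V_\alpha$ inside $\widehat{Q}$ is also well placed and matches how the paper uses the lemma downstream.
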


The statements (i) and (ii) of Lemma \ref{visibility} correspond to 
\cite[Proposition 3.5 (i) and Proposition 3.7 (i)]{hlr}. 
The statement (iii) of Lemma \ref{visibility} follows from the continuity of $B$ on $\widehat{Q}$ and (i). 
Although the definition of $| \cdot |_1$ in this paper is different from that of \cite{hlr}, 
their proofs in \cite{hlr} still work since $B(x,\alpha) = |\alpha|_1B(x,\widehat{\alpha})$ 
and $|\alpha|_1>0$ hold for $\alpha \in \Delta$.

\begin{prop}\label{cover}
There is no element in $\widehat{Q} \cap \mathrm{conv}(\widehat{\Delta})$ 
which is never visible from any normalized simple root. 
In other words, $\widehat{Q} \cap \mathrm{conv}(\widehat{\Delta})$ is covered by $\{V_\alpha : \alpha \in \Delta\}$. 
\end{prop}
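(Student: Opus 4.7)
The plan is to argue by contradiction using the characterization of visibility from Lemma \ref{visibility}(i): a point $x \in \widehat{Q}$ lies in $V_\alpha$ if and only if $B(\alpha, x) \geq 0$. So the statement to prove is equivalent to the claim that there is no $x \in \widehat{Q} \cap \mathrm{conv}(\widehat{\Delta})$ satisfying $B(\alpha, x) < 0$ for every $\alpha \in \Delta$.

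Suppose, towards a contradiction, that such an $x$ exists. Since $x \in \mathrm{conv}(\widehat{\Delta})$, I would first rewrite $x$ in terms of the (unnormalized) simple roots. Writing $x = \sum_{i=1}^n t_i \widehat{\alpha_i}$ with $t_i \geq 0$ and $\sum_i t_i = 1$, and using $\widehat{\alpha_i} = \alpha_i / |\alpha_i|_1$ together with $|\alpha_i|_1 > 0$, I get $x = \sum_{i=1}^n c_i \alpha_i$ with $c_i = t_i/|\alpha_i|_1 \geq 0$ and at least one $c_i > 0$.

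Next I would expand $B(x,x)$ using bilinearity:
\[
B(x,x) \;=\; \sum_{i=1}^n c_i\, B(\alpha_i, x).
\]
By assumption every factor $B(\alpha_i, x)$ is strictly negative, while each $c_i$ is nonnegative and at least one is strictly positive. Hence $B(x,x) < 0$. This contradicts the hypothesis $x \in \widehat{Q}$, which forces $B(x,x) = 0$.

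The argument is essentially a one-line positivity computation once one translates everything through Lemma \ref{visibility}(i); the only step that requires a small amount of care is the conversion between expressing $x$ as a convex combination of the normalized simple roots $\widehat{\alpha_i}$ and expressing $x$ as a nonnegative combination of the simple roots $\alpha_i$ themselves, so that the sign of the coefficients in the expansion of $B(x,x)$ is controlled. I do not anticipate any serious obstacle.
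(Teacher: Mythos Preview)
Your proposal is correct and follows essentially the same argument as the paper: write $x$ as a nonnegative combination of (normalized) simple roots, expand $B(x,x)$ bilinearly, and observe that if every $B(\alpha_i,x)$ were negative the sum would be negative, contradicting $x\in\widehat{Q}$. The only cosmetic difference is that the paper works directly with the normalized simple roots $\widehat{\alpha_i}$ (using that $B(x,\widehat{\alpha_i})$ and $B(x,\alpha_i)$ have the same sign), whereas you insert an explicit conversion to the unnormalized $\alpha_i$; both routes are equally valid.
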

\begin{proof}
Let $x \in \widehat{Q} \cap \text{conv}(\widehat{\Delta})$. 
Then $x$ can be written like $x=\sum_{i=1}^n x_i \widehat{\alpha_i}$, where $x_i \geq 0$ and $\sum_{i=1}^n x_i=1$ 
and $\alpha_1,\ldots,\alpha_n$ are simple roots. Thus we have 
$$B(x,x)=B\left( x, \sum_{i=1}^n x_i \widehat{\alpha_i} \right)= \sum_{i=1}^n x_iB\left( x, \widehat{\alpha_i} \right).$$
If we suppose that $B(x, \widehat{\alpha_i})<0$ for every $i$, then $B(x,x)<0$, a contradiction. 
Thus, there is at least one $i$ such that $B(x, \alpha_i) \geq 0$. 
This implies that $x$ is visible from some normalized simple root $\widehat{\alpha_i}$ from Lemma \ref{visibility}. 
\end{proof}

In the following, we prove some lemmas for our proof of Theorem \ref{mein}. 

\begin{lem}\label{1/2}
For any $x \in \widehat{Q}$ and $\alpha \in \Delta$, one has $B(\alpha,x)<\frac{1}{2|\alpha|_1}$. 
\end{lem}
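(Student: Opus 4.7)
The plan is to apply the bilinear form $B$ to the single vector $\widehat{\alpha} - x$. First, observe that $|\cdot|_1$ is a linear functional on $V$ and that both $\widehat{\alpha}$ and $x$ lie in $V_1$, so their difference $\widehat{\alpha} - x$ lies in $V_0$.

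Next, I would invoke a fact already established inside the proof of Lemma \ref{zerodake}: the form $B$ is positive definite on $V_0$. In the orthonormal eigenbasis $p_1, \dots, p_{n-1}, o$, the hyperplane $V_0$ coincides with $\mathrm{span}(p_1, \dots, p_{n-1})$, and on that span $B$ is represented by $\mathrm{diag}(\lambda_1, \dots, \lambda_{n-1})$ with every $\lambda_i > 0$. Hence $B(\widehat{\alpha} - x, \widehat{\alpha} - x) \ge 0$, with equality if and only if $\widehat{\alpha} = x$. The equality case is immediately ruled out: $B(\widehat{\alpha}, \widehat{\alpha}) = 1/|\alpha|_1^2 > 0$, while $B(x,x) = 0$ because $x \in \widehat{Q}$.

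Finally, I would expand the strict inequality $B(\widehat{\alpha} - x, \widehat{\alpha} - x) > 0$ by bilinearity. Using $B(\widehat{\alpha}, \widehat{\alpha}) = 1/|\alpha|_1^2$, $B(\widehat{\alpha}, x) = B(\alpha, x)/|\alpha|_1$ (as noted just after the definition of $|\cdot|_1$), and $B(x,x) = 0$, this becomes $1/|\alpha|_1^2 - 2 B(\alpha, x)/|\alpha|_1 > 0$, which after multiplying through by $|\alpha|_1^2 > 0$ is exactly the desired inequality $B(\alpha, x) < 1/(2|\alpha|_1)$. I do not anticipate any real obstacle here; the only nontrivial step is spotting the passage to $\widehat{\alpha} - x \in V_0$, where the otherwise indefinite form $B$ becomes positive definite.
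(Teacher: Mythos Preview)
Your proof is correct and takes a genuinely different route from the paper's. The paper argues by contradiction through the reflection $s_\alpha$: assuming $B(\alpha,x)\ge \tfrac{1}{2|\alpha|_1}$ gives $|s_\alpha(x)|_1=1-2B(\alpha,x)|\alpha|_1\le 0$, while any $y\in\partial V_\alpha$ has $|s_\alpha(y)|_1=1>0$; by continuity on $\widehat{Q}$ there would then be some $z\in\widehat{Q}$ with $|s_\alpha(z)|_1=0$, contradicting Lemma~\ref{zerodake}. Your argument bypasses the reflection and the intermediate-value step entirely, extracting the inequality directly from the positive-definiteness of $B$ on $V_0$. The two approaches are in fact computing the same quantity, since
\[
|\alpha|_1^{2}\,B(\widehat{\alpha}-x,\widehat{\alpha}-x)=1-2B(\alpha,x)|\alpha|_1=|s_\alpha(x)|_1,
\]
so the paper is establishing the same positivity via a topological detour. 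Your route is shorter, avoids needing $\partial V_\alpha\neq\emptyset$, and makes the passage to Remark~\ref{metorikku} and Proposition~\ref{hitoshii} (where the same positive-definiteness on $V_0$ is invoked again) more uniform.
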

\begin{proof}
Suppose that $B(\alpha, x) \geq \frac{1}{2|\alpha|_1}$ 
for some $x \in \widehat{Q}$ and $\alpha \in \Delta$. 
Then we have $|s_\alpha(x)|_1=1 - 2B(\alpha,x)|\alpha|_1 \leq 0$. On the other hand, for $y \in \partial V_\alpha$, 
we have $B(\alpha,y)=0$ by Lemma \ref{visibility} (iii), implying that $|s_\alpha(y)|_1=1-2B(y,\alpha)|\alpha|_1=1>0$. 
Now from the continuity of a linear map $B(\alpha, *)$, there should be $z \in \widehat{Q}$ 
such that $|s_\alpha(z)|_1=0$. 
However, as mentioned in Lemma \ref{zerodake}, $W(V_0) \cap \widehat{Q} = \emptyset$, a contradiction. 
\end{proof}

\begin{prop}\label{hitoshii}
        For $x,y \in Q\setminus\{{\bf 0}\}$, 
\begin{itemize}
	\item[(a)] one has $B(x,y) = 0$ if and only if $\widehat{x} = \widehat{y}$; 
	\item[(b)] if $\widehat{x} \not= \widehat{y}$, then one has $B(\widehat{x},\widehat{y}) < 0$; 
	\item[(c)] for any $\alpha \in \Delta$ and $x,y \in V_\alpha$, we have 
        		$$|B(s_\alpha \cdot x, s_\alpha \cdot y)| \geq |B(x,y)|$$
        and the equality of this inequality holds if and only if $x,y \in \partial V_\alpha$. 
\end{itemize}
\end{prop}
\begin{proof}
	For $x,y \in Q \setminus \{{\bf 0}\}$ with $\widehat{x} \not= \widehat{y}$, 
        one has $|\widehat{x}-\widehat{y}|_1=|\widehat{x}|_1-|\widehat{y}|_1=0$. Thus, $\widehat{x}-\widehat{y} \in V_0$. 
	Note that $x \not\in V_0$ and $y \not\in V_0$ by $Q \cap V_0=\{{\bf 0}\}$ (Lemma \ref{zerodake}). 
        Moreover, it also follows that 
        $B$ is positive-definite on $V_0$. Thus, we see that 
        $B(\widehat{x}-\widehat{y},\widehat{x}-\widehat{y})=-2B(\widehat{x},\widehat{y}) \geq 0$ 
	and $B(\widehat{x}-\widehat{y},\widehat{x}-\widehat{y})=0$ if and only if $\widehat{x}-\widehat{y}={\bf 0}$. 
        Hence, we conclude 
        \begin{itemize}
        	\item[(a)] $B(x,y)=|x|_1|y|_1B(\widehat{x},\widehat{y})=0$ if and only if $\widehat{x}=\widehat{y}$; 
        	\item[(b)] $B(\widehat{x},\widehat{y}) < 0$ if $\widehat{x} \not= \widehat{y}$. 
        \end{itemize}
	
        In addition, thanks to Lemma \ref{visibility} and Lemma \ref{1/2}, 
        one has $0 \leq B(x,\alpha)|\alpha|_1, B(y, \alpha)|\alpha|_1 < \frac{1}{2}$. 
	Thus the inequality $0 < |s_\alpha(x)|_1=|x-2B(x,\alpha)\alpha|_1 \leq 1$ holds. 
	Similarly, $0 < |s_\alpha(y)|_1 \leq 1$. Hence 
	$$|B(s_\alpha \cdot x, s_\alpha \cdot y)|=\frac{|B(s_\alpha(x), s_\alpha(y))|}{|s_\alpha(x)|_1 |s_\alpha(y)|_1}
	=\frac{|B(x, y)|}{|s_\alpha(x)|_1 |s_\alpha(y)|_1} \geq |B(x,y)|.$$
        In particular, since $|s_\alpha(x)|_1=1$ if and only if we have $x \in \partial V_\alpha$, 
        $|B(x,y)| < |B(s_\alpha \cdot x, s_\alpha \cdot y)|$ holds if $x \not\in \partial V_\alpha$ or $y \not\in \partial V_\alpha$. 
\end{proof}

\subsection{A metric on $\widehat{Q}$} 

Next, we define a metric on $\widehat{Q}$ by a bilinear form $B$. 

We first remark the following. 

\begin{rem}\label{metorikku}{\em 
As in the proof of Proposition \ref{hitoshii}, 
$B$ is positive-definite on $V_0$ (not $V$) by $Q \cap V_0 = \{{\bf 0}\}$. 
Thus, $B(x-y,x-y)^{\frac{1}{2}}$ defines a metric on $V_1$. 
Moreover, since $|B(x-y,x-y)|=2|B(x,y)|$ for $x,y \in \widehat{Q}$, 
$|B(\cdot, \cdot)|^{\frac{1}{2}} : \widehat{Q} \times \widehat{Q} \to \R_{\geq 0}$ 
defines a metric on $\widehat{Q}$. We see that $\sup_{x, y \in \widehat{Q}, x \not= y}\frac{|B(x,y)|^\frac{1}{2}}{\|x-y\|}$ 
is bounded, where $\|\cdot\|$ denotes the Euclidean norm, because of the following: 
\begin{align*}
\sup_{x,y \in \widehat{Q}, x \not=y}\frac{\sqrt{2}|B(x,y)|^\frac{1}{2}}{\|x-y\|} 
&=\sup_{x,y \in \widehat{Q}, x \not=y}\frac{\sqrt{2}\left|\frac{1}{2}B(x-y,x-y)\right|^\frac{1}{2}}{\|x-y\|} \\
&\leq \sup_{v \in V_0, v \not= 0} \frac{|B(v,v)|^\frac{1}{2}}{\|v\|} = 
\sup_{v \in V_0, \|v\|=1} |B(v,v)|.
\end{align*}
Since the region $\{v \in V_0 : \|v\|=1\}$ is compact and the bilinear map $B(,)$ is continuous, 
there is $u \in V$ such that $|B(u,u)|=\sup_{v \in V_0, \|v\|=1} |B(v,v)|< \infty$. Conversely, 
we also see that $\sup_{x, y \in \widehat{Q}, x \not= y}\frac{\|x-y\|}{|B(x,y)|^\frac{1}{2}}$ is bounded 
because $B$ is positive-definite on $V_0$. These show the comparability of $|B(\cdot, \cdot)|^{\frac{1}{2}}$ and $\|\cdot\|$ on $\widehat{Q}$. 
}\end{rem}

Let $c$ be a curve in $\widehat{Q}$ connecting $x$ and $y$ for $x, y \in \widehat{Q}$. 
The {\it length} $\ell_B(c)$ of $c$ is defined by
$$
	\ell_B(c) = \sup_{C} \sum_{i=1}^n |B(x_{i-1},x_i)|^{\frac{1}{2}} ,
$$
where the supremum is taken over all chains $C = \{ x = x_0,x_1,\ldots,x_n = y \}$ on $c$ with unbounded $n$. 
Given $x, y \in \widehat{Q}$ with $x \not= y$, we define 
$$
	d_B(x,y) = \inf\{\ell_B(c):c \text{ is a curve joining }x \text{ and }y\}.
$$
Since $B$ is symmetric, the symmetry of $d_B$ is trivial. 
Moreover, the nonnegativity of $d_B$ is also trivial. 
In addition, the triangle inequality can be seen easily. 
Hence $d_B : \widehat{Q} \times \widehat{Q} \to \R_{\geq 0}$ 
is a pseudometric on $\widehat{Q}$. The following lemma guarantees that $d_B$ is a metric. 
\begin{lem}
For any $x,y \in \widehat{Q}$, if $d_B(x,y)=0$, then $x=y$.
\end{lem}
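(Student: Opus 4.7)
My plan is to compare the path-length pseudometric $d_B$ with the simpler chordal metric $d(x,y) = |B(x,y)|^{1/2}$ introduced in Remark \ref{metorikku}. The key observation is that $d$ is already known to be a genuine metric on $\widehat{Q}$ (it satisfies the triangle inequality because $B$ is positive-definite on $V_0$ and $|B(x-y,x-y)| = 2|B(x,y)|$ for $x,y \in \widehat{Q}$), and it separates points: if $|B(x,y)|^{1/2} = 0$ then by Proposition \ref{hitoshii}(a) we have $\widehat{x} = \widehat{y}$, and since $x,y \in \widehat{Q} \subset V_1$ this forces $x = y$. So it suffices to establish the inequality $d_B(x,y) \geq d(x,y)$.

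To get this inequality, I would fix any curve $c$ in $\widehat{Q}$ joining $x$ and $y$ and any chain $C = \{x = x_0, x_1, \ldots, x_n = y\}$ along $c$. Applying the triangle inequality of $d$ repeatedly,
\begin{equation*}
\sum_{i=1}^n |B(x_{i-1},x_i)|^{1/2} \;=\; \sum_{i=1}^n d(x_{i-1},x_i) \;\geq\; d(x_0,x_n) \;=\; |B(x,y)|^{1/2}.
\end{equation*}
Taking the supremum over chains gives $\ell_B(c) \geq |B(x,y)|^{1/2}$, and then taking the infimum over all curves $c$ yields $d_B(x,y) \geq |B(x,y)|^{1/2}$.

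Now assume $d_B(x,y) = 0$. By the previous inequality, $|B(x,y)|^{1/2} = 0$, hence $B(x,y)=0$, and Proposition \ref{hitoshii}(a) together with $x,y \in \widehat{Q}$ forces $x = y$, completing the proof.

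I do not anticipate any real obstacle here: the only thing to verify carefully is that $d$ genuinely obeys the triangle inequality, which is precisely what Remark \ref{metorikku} records via the positive-definiteness of $B$ on $V_0$. The definition of $d_B$ as an infimum of suprema does not cause trouble, because each single chain already gives a lower bound for $\ell_B(c)$, and the trivial triangle-inequality estimate above survives passage to both the supremum and the infimum.
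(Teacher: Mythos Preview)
Your proof is correct and follows essentially the same route as the paper's: both establish $d_B(x,y)\ge |B(x,y)|^{1/2}$ and then invoke Proposition~\ref{hitoshii}. The paper bypasses the triangle inequality by simply noting that the trivial two-point chain $\{x,y\}$ already occurs in the supremum defining $\ell_B(c)$, so $\ell_B(c)\ge |B(x,y)|^{1/2}$ for every curve $c$ directly; your appeal to the triangle inequality of $d$ is valid but unnecessary.
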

\begin{proof}
When $d_B(x,y)=0$, for an arbitrary $\epsilon > 0$, 
there exists a curve $c$ such that $\ell_B(c)<\epsilon$. 
By the definition of $\ell_B$, since $\sum_{i=1}^m|B(x_{i-1},x_i)|^{\frac{1}{2}}<\epsilon$ 
for any chain of $c$, one has $|B(x,y)|^\frac{1}{2} < \epsilon$. 
This means that $B(x,y)=0$. Therefore, $x=y$ by Proposition \ref{hitoshii}. 
\end{proof}

\begin{lem}\label{metric2}
Let $d_E$ be the length metric using $\|\cdot\|$ defined by the same way as $d_B$. 
Then the metric space $(\widehat{Q},d_B)$ is homeomorphic to $(\widehat{Q},d_E)$. 
\end{lem}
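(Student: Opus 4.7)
The plan is to derive the homeomorphism from the bi-Lipschitz comparability already recorded in Remark \ref{metorikku}. That remark supplies two positive constants $c_1, c_2$ with
\[
    c_1 \|x-y\| \;\leq\; |B(x,y)|^{\frac{1}{2}} \;\leq\; c_2 \|x-y\|
    \qquad \text{for all } x,y \in \widehat{Q},
\]
because $B$ is positive-definite on the hyperplane direction $V_0$ and the unit sphere of $V_0$ is compact, while $Q \cap V_0 = \{\mathbf{0}\}$ keeps $|B(x,y)|^{1/2}$ bounded below by a constant multiple of $\|x-y\|$ on $\widehat{Q}$.

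First I would apply this inequality termwise to any chain $C = \{x=x_0,x_1,\ldots,x_n=y\}$ on a curve $c \subset \widehat{Q}$ joining $x$ and $y$, obtaining
\[
    c_1 \sum_{i=1}^n \|x_{i-1}-x_i\| \;\leq\; \sum_{i=1}^n |B(x_{i-1},x_i)|^{\frac{1}{2}} \;\leq\; c_2 \sum_{i=1}^n \|x_{i-1}-x_i\|.
\]
Taking the supremum over chains on $c$ transfers this to the two lengths: $c_1\,\ell_E(c) \leq \ell_B(c) \leq c_2\,\ell_E(c)$, where $\ell_E$ denotes the length functional built from $\|\cdot\|$ in the same way as $\ell_B$ is built from $|B(\cdot,\cdot)|^{1/2}$.

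Next I would take the infimum over all curves in $\widehat{Q}$ joining $x$ and $y$; the chain of inequalities survives the infimum and yields
\[
    c_1\, d_E(x,y) \;\leq\; d_B(x,y) \;\leq\; c_2\, d_E(x,y)
    \qquad \text{for all } x,y \in \widehat{Q}.
\]
Thus $d_B$ and $d_E$ are bi-Lipschitz equivalent as metrics on $\widehat{Q}$. In particular, the identity map $\mathrm{id} \colon (\widehat{Q}, d_B) \to (\widehat{Q}, d_E)$ and its inverse are (Lipschitz) continuous, so they give the required homeomorphism.

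The only subtle point I would need to check is that the constants from Remark \ref{metorikku} really are uniform on $\widehat{Q}$; this reduces, via the identity $|B(x-y,x-y)| = 2|B(x,y)|$ for $x,y\in\widehat{Q}$, to the comparability of $|B(v,v)|^{1/2}$ and $\|v\|$ on the direction space $V_0$, which is handled by compactness of the Euclidean unit sphere in $V_0$ together with the fact that $B$ is positive-definite on $V_0$. Once this is in hand, the homeomorphism is essentially automatic from the length-metric construction.
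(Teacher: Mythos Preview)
Your proposal is correct and follows essentially the same route as the paper: both arguments use the bi-Lipschitz comparability of $|B(\cdot,\cdot)|^{1/2}$ and $\|\cdot\|$ on $\widehat{Q}$ established in Remark~\ref{metorikku}, apply it termwise to chains, pass to lengths, and then to the induced length metrics to conclude that $d_B$ and $d_E$ are bi-Lipschitz equivalent (hence the identity is a homeomorphism). The only cosmetic difference is that the paper phrases the passage from chain sums to $\ell_B(c)$ via an $\epsilon$-approximation, whereas you take suprema directly; the content is the same.
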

\begin{proof}
The discussions in Remark \ref{metorikku} imply the comparability of $d_E$ and $d_B$. 
In fact, the (Lipschitz) continuity of $\text{id} : (\widehat{Q},d_E) \to (\widehat{Q},d_B)$ 
can be proved as follows. For an arbitrary curve $c$ in $\widehat{Q}$ joining $x$ and $y$ 
and $\epsilon > 0$, there exists a chain $C=\{x=x_0,x_1,\ldots,x_m=y\}$ such that 
$$\ell_B(c)-\epsilon \leq \sum_{i=1}^m|B(x_{i-1},x_i)|^\frac{1}{2} \leq 
K\sum_{i=1}^m\|x_{i-1}-x_i\| \leq K \ell_E(c),$$ 
where $K=\sup_{x,y \in \widehat{Q}, x \not=y}\frac{\sqrt{2}|B(x,y)|^\frac{1}{2}}{\|x-y\|}$ 
and $\ell_E(c)$ is the length of $c$ defined by using $\| \cdot \|$. 
Thus, for any $x,y \in \widehat{Q}$ and $\epsilon > 0$, we have 
$d_B(x,y)-\epsilon \leq K d_E(x,y)$. Hence, $d_B(x,y) \leq K d_E(x,y)$. 
Similarly, we also see the (Lipschitz) continuity of $\text{id} : (\widehat{Q},d_B) \to (\widehat{Q},d_E)$. 
\end{proof}

Since $\widehat{Q}$ is the boundary of an ellipsoid in $V \cong \R^n$, 
$\widehat{Q}$ is a $C^\infty$ manifold and its topology induced from $d_E$ 
coincides with the relative topology of $V$. 
Clearly, $\widehat{Q}$ is compact on the relative topology of $V$, 
so $(\widehat{Q},d_B)$ is also compact by Remark \ref{metorikku} and Lemma \ref{metric2}. 
The compactness of $(\widehat{Q},d_B)$ also implies that $(\widehat{Q}, d_B)$ is a geodesic space 
by Hopf--Rinow Theorem (see \cite[p.9]{gromov}). 
Moreover, since each normalized simple reflection is a homeomorphism on $(\widehat{Q},\|\cdot \|)$, 
we obtain the following:

\begin{prop}
The metric space $(\widehat{Q},d_B)$ is a compact geodesic space. 
Moreover, $W$ acts on $(\widehat{Q}, d_B)$ as a homeomorphism. 
\end{prop}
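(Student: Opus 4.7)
My plan is to piece together the statement from the preceding results; no new ideas are really needed, just a careful assembly.

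First, for compactness, I would invoke the fact that $\widehat{Q}$, being an ellipsoid in the hyperplane $V_1 \subset V \cong \R^n$, is closed and bounded, hence compact in the Euclidean topology $d_E$. By Lemma \ref{metric2}, the identity map between $(\widehat{Q}, d_E)$ and $(\widehat{Q}, d_B)$ is a homeomorphism, so compactness transfers to $(\widehat{Q}, d_B)$.

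Second, for the geodesic space property, I would point out that $d_B$ is manifestly a length metric, since it was defined as the infimum of $B$-lengths over curves. A compact metric length space is automatically complete and locally compact, so the Hopf--Rinow theorem as cited in \cite[p.\ 9]{gromov} applies and yields that every pair of points in $\widehat{Q}$ is joined by a minimizing $d_B$-geodesic.

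Third, for the $W$-action, the sentence just before the proposition already records that each simple reflection acts as a homeomorphism on $(\widehat{Q}, \|\cdot\|)$; this rests on the continuity of each $B$-reflection and of normalization, together with Lemma \ref{zerodake}, which ensures that $|s_\alpha(x)|_1 \neq 0$ for $x \in \widehat{Q}$ so the normalized action is defined everywhere. Using Lemma \ref{metric2} once more to transfer the homeomorphism property from $(\widehat{Q}, d_E)$ to $(\widehat{Q}, d_B)$, and observing that $W$ is generated by the simple reflections, I would conclude that every $w \in W$ acts on $(\widehat{Q}, d_B)$ as a homeomorphism.

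The only real obstacle is administrative: one must confirm that the hypotheses of Hopf--Rinow are in place. Since both completeness and local compactness follow immediately from the compactness already established, and the length-metric property is built into the definition of $d_B$, no genuine technical difficulty arises. The entire proof is a short synthesis of Lemma \ref{zerodake}, Lemma \ref{metric2}, Remark \ref{metorikku}, and the Hopf--Rinow theorem.
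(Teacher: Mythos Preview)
Your proposal is correct and matches the paper's own argument essentially line for line: the paper likewise deduces compactness from the ellipsoid description together with Lemma~\ref{metric2} and Remark~\ref{metorikku}, invokes Hopf--Rinow (\cite[p.~9]{gromov}) for the geodesic property, and transfers the homeomorphism property of the normalized simple reflections from $(\widehat{Q},\|\cdot\|)$ to $(\widehat{Q},d_B)$. No additional ideas are needed beyond what you have outlined.
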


Finally, we observe more precise properties of the normalized action on $(\widehat{Q},d_B)$.

\begin{prop}\label{hodai2}
	Let $\alpha \in \Delta$ and $x,y \in V_\alpha$. 
	\begin{itemize}
	\item[(i)] 		
                Each geodesic between $x$ and $y$ is contained in $V_\alpha$. 
	\item[(ii)]
		For any visible curve $c$ from $\widehat{\alpha}$, we have 
                \begin{align}\label{siki}
                	\ell_B(c) \leq \ell_B(s_\alpha \cdot c). 
                \end{align}
                Moreover, $d_B(x,y) \leq d_B(s_\alpha \cdot x, s_\alpha \cdot y)$. 
        \item[(iii)] 
         	For any non-trivial curve $c$, the equality of \eqref{siki} holds if and only if $c \subset \partial V_\alpha$. 
        \end{itemize}
\end{prop}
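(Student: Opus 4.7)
The plan is to extract all three parts from a single pointwise identity. Using $W$-invariance of the $B$-reflection $s_\alpha$ together with the definition of the normalized action,
\[
B(s_\alpha\cdot x,\, s_\alpha\cdot y) \;=\; \frac{B(x,y)}{|s_\alpha(x)|_1\,|s_\alpha(y)|_1}
\]
for all $x,y\in\widehat{Q}$, where $|s_\alpha(z)|_1 = 1-2B(\alpha,z)|\alpha|_1$ is strictly positive on $\widehat{Q}$ by Lemma \ref{1/2}. For $z\in V_\alpha$, Lemma \ref{visibility} yields $0<|s_\alpha(z)|_1\le 1$, with equality iff $z\in\partial V_\alpha$; for $z$ outside $V_\alpha$ the inequality reverses. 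Consequently $|B(s_\alpha\cdot x, s_\alpha\cdot y)|^{1/2}\ge|B(x,y)|^{1/2}$ whenever $x,y\in V_\alpha$, with equality iff both lie on $\partial V_\alpha$, and the reverse inequality holds (with the same equality condition) when both $x$ and $y$ lie outside $V_\alpha$.

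For the length inequality in (ii), apply this pointwise estimate edgewise to every chain on a visible curve $c\subset V_\alpha$ and take the supremum. For (iii), if $c\not\subset\partial V_\alpha$, continuity and compactness produce a sub-arc $c_0\subset c$ with $B(\alpha,\cdot)\ge\epsilon>0$; there $|s_\alpha(z)|_1\le 1-2\epsilon|\alpha|_1<1$, so the expansion factor on $c_0$ is uniformly bounded above $1$ and $\ell_B(s_\alpha\cdot c_0)>\ell_B(c_0)$. Additivity of $\ell_B$ across the concatenation $c=c_{\mathrm{before}}*c_0*c_{\mathrm{after}}$ then propagates the strict inequality to all of $c$.

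For (i), suppose a geodesic $\gamma$ from $x,y\in V_\alpha$ leaves $V_\alpha$; by continuity there is a maximal open subinterval on which $\gamma$ is strictly outside, with endpoints $p,q\in\partial V_\alpha$. Write $\gamma_0$ for this outside piece. Since $s_\alpha$ fixes $\partial V_\alpha$ pointwise, $s_\alpha\cdot\gamma_0\subset V_\alpha$ still joins $p$ to $q$ and is not contained in $\partial V_\alpha$; applying (iii) to $s_\alpha\cdot\gamma_0$ gives $\ell_B(s_\alpha\cdot\gamma_0)<\ell_B(s_\alpha\cdot(s_\alpha\cdot\gamma_0))=\ell_B(\gamma_0)$. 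Splicing $s_\alpha\cdot\gamma_0$ in place of $\gamma_0$ yields a strictly shorter curve from $x$ to $y$, contradicting that $\gamma$ is a geodesic. The distance inequality in (ii) follows from the same reflection idea via a folding map $\pi\colon\widehat{Q}\to\overline{V_\alpha}$ defined by $\pi(z)=z$ on $V_\alpha$ and $\pi(z)=s_\alpha\cdot z$ on $\widehat{Q}\setminus V_\alpha$ (continuous since $s_\alpha$ fixes $\partial V_\alpha$). For any curve $c''$ from $s_\alpha\cdot x$ to $s_\alpha\cdot y$, refine every chain of $c''$ by inserting the boundary crossings so that each edge lies on one side of $\partial V_\alpha$, apply the pointwise inequalities edgewise to the refined chain, and use the triangle inequality for the $|B(\cdot,\cdot)|^{1/2}$-metric to get $\ell_B(\pi\circ c'')\le\ell_B(c'')$. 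Since $\pi\circ c''$ joins $x$ and $y$, taking the infimum over $c''$ yields $d_B(x,y)\le d_B(s_\alpha\cdot x,s_\alpha\cdot y)$.

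The main obstacle I anticipate is the bookkeeping in (i): the geodesic $\gamma$ could exit and re-enter $V_\alpha$ repeatedly, or touch $\partial V_\alpha$ only tangentially, so the reflection-and-splice must produce a well-defined continuous curve whose length can be compared piece by piece. The saving observation is that any single open interval of strict exit already activates the strict inequality in (iii), so it suffices to identify and reflect one such interval rather than process every crossing; this isolates the essential inequality from the topological subtleties of the geodesic.
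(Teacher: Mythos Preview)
Your proof is correct and rests on the same core identity as the paper: $B(s_\alpha\cdot x,s_\alpha\cdot y)=B(x,y)/(|s_\alpha(x)|_1\,|s_\alpha(y)|_1)$ together with $|s_\alpha(z)|_1=1-2B(\alpha,z)|\alpha|_1\in(0,1]$ on $V_\alpha$. The reflection-and-splice argument for (i) is essentially the paper's decomposition into visible and non-visible pieces, and your observation that it suffices to reflect a \emph{single} exiting sub-arc is exactly the right simplification.

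Where you genuinely diverge is in (iii) and in the distance inequality of (ii). For (iii), the paper argues by contradiction via a limiting procedure: assuming $\ell_B(c)=\ell_B(s_\alpha\cdot c)$, it subdivides $c$ into $k$ pieces, uses the elementary bound $\frac{\sum a_i}{\sum b_i}\le\max_i\frac{a_i}{b_i}$ on chain sums, and lets $\epsilon\to 0$ and $k\to\infty$ to force every point of $c$ into $\partial V_\alpha$. Your contrapositive is cleaner: locate a sub-arc $c_0$ on which $B(\alpha,\cdot)\ge\epsilon$, obtain a uniform expansion factor $\lambda=(1-2\epsilon|\alpha|_1)^{-1}>1$, and invoke additivity of $\ell_B$ under concatenation. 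This avoids the double limit and makes the source of strictness transparent; it does rely on $\ell_B(c_0)>0$, which follows from the comparability of $d$ with the Euclidean metric (Remark~\ref{metorikku}). For the distance inequality $d_B(x,y)\le d_B(s_\alpha\cdot x,s_\alpha\cdot y)$, the paper leaves this essentially implicit after proving (i), whereas your folding map $\pi$ gives an explicit $1$-Lipschitz (for $\ell_B$) retraction onto $V_\alpha$. One small point: when you ``insert the boundary crossings'' you only need \emph{one} intermediate point of $\partial V_\alpha$ per mixed edge (guaranteed by the intermediate value theorem applied to $t\mapsto B(\alpha,c''(t))$), not all of them, so no finiteness issue arises.
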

\begin{proof}
	Let $c'$ be a geodesic joining $x$ and $y$. Let us decompose $c'$ into 
        $$c'= \bigcup_{i \in I} c_i \cup \bigcup_{j \in J} c_j,$$ 
        where $c_i$ ($i \in I$) is a visible curve from $\alpha$ and $c_j$ ($j \in J$) the others. 
	Remark that $J$ might be empty. 
        Set $c''=\bigcup_{i \in I} c_i \cup \bigcup_{j \in J} s_\alpha \cdot c_j$. 
        Then $c''$ is a curve joining $x$ and $y$ because 
        each point of the boundary of the visible area from $\alpha$ is fixed by $s_\alpha$. 
	By Proposition \ref{hitoshii} (c), we obtain $\ell_B(c') > \ell_B(c'')$ if $J$ is not empty. 
        However, since $c'$ is a geodesic joining $x$ and $y$, $J$ should be empty. 
	This says that each geodesic between $x$ and $y$ is contained in $V_\alpha$. 
        
        Next, we prove (iii). If $c \subset \partial V_\alpha$, 
        since $B(\alpha,x)=0$ for any $x \in c$, the equality of \eqref{siki} directly follows. 
        Assume that $\ell_B(c)=\ell_B(s_\alpha \cdot c)$. 
        Then for an arbitrary curve $c' \subset c$, we also have $\ell_B(c')=\ell_B(s_\alpha \cdot c')$. 
        Decompose $c$ into $k$ curves for an arbitrary fixed $k \in \Z_{>0}$. 
        Let $c_1$ be one component of such curves. For $0 < \epsilon < 1$, by the definition of $\ell_B$, 
        there exists a chain $\{x_1,\ldots,x_m\}$, where $x_1$ and $x_m$ are the endpoints of $c_1$, 
        such that 
	$$\sum_{i=1}^m|B(x_{i-1},x_i)|^\frac{1}{2} \geq (1 - \epsilon)^\frac{1}{2}\ell_B(c_1).$$
        Since $\ell_B(c_1)=\ell_B(s_\alpha \cdot c_1)$, one has 
        $$(1 - \epsilon)^\frac{1}{2}\ell_B(c_1)=(1 - \epsilon)^\frac{1}{2}\ell_B(s_\alpha \cdot c_1)
        \geq (1 - \epsilon)^\frac{1}{2}\sum_{i=1}^m|B(s_\alpha \cdot x_{i-1}, s_\alpha \cdot x_i)|^\frac{1}{2}.$$ Hence, 
        $$\frac{\sum_{i=1}^m|B(x_{i-1},x_i)|^\frac{1}{2}}{\sum_{i=1}^m|B(s_\alpha \cdot x_{i-1},s_\alpha \cdot x_i)|^\frac{1}{2}}
        \geq (1-\epsilon)^\frac{1}{2}.$$

	Now, in general, for positive real numbers $a_1,\ldots,a_m$ and $b_1,\ldots,b_m$, 
        we see that $$\frac{\sum_{i=1}^ma_i}{\sum_{i=1}^mb_i}\leq \max_{i\in\{1,\ldots,m\}}\frac{a_i}{b_i}.$$

	Thus, there exists some $i$ such that 
        \begin{align*}
        \frac{|B(x_{i-1},x_i)|^\frac{1}{2}}{|B(s_\alpha \cdot x_{i-1},s_\alpha \cdot x_i)|^\frac{1}{2}}\geq (1-\epsilon)^\frac{1}{2}
	\Longleftrightarrow 
        \frac{|B(x_{i-1},x_i)|}{|B(s_\alpha \cdot x_{i-1},s_\alpha \cdot x_i)|}\geq 1-\epsilon \\
	\Longleftrightarrow (1-2B(x_{i-1},\alpha)|\alpha|_1)(1-2B(x_i,\alpha)|\alpha|_1) \geq 1-\epsilon. 
	\end{align*}
        On the other hand, since $x_{i-1},x_i \in V_\alpha$, one has 
        $1 - 2B(x_{i-1},\alpha)|\alpha|_1 \leq 1$ and $1 - 2B(x_i,\alpha)|\alpha|_1 \leq 1$. Hence 
        $1-\epsilon \leq 1 - 2B(x_{i-1},\alpha)|\alpha|_1 \leq 1$ and $1-\epsilon \leq 1 - 2B(x_i,\alpha)|\alpha|_1 \leq 1$. 
        Since $\epsilon$ is arbitrary, by taking $\epsilon$ as $\epsilon \to 0$, 
        we see that $x_{i-1}$ and $x_i$ belong to $\partial V_\alpha$. 
        Moreover, since $k$ is also arbitrary, by taking $k$ as $k \to \infty$, 
        we conclude that $c \subset \partial V_\alpha$, as desired. 
\end{proof}


\section{A proof of Theorem \ref{kidou}}\label{onepoint}

In this section, we prove Theorem \ref{kidou}. 
First, we note that $x \in \widehat{Q}$ is fixed by the normalized action of $s_\alpha$ 
for $\alpha \in \Delta$ if and only if $B(x,\alpha)=0$. 
\begin{rem}\label{atarimae}{\em 
In general, each point $x \in \widehat{Q}$ fixed by every normalized action of $s_\alpha$ 
corresponds to an eigenvector whose eigenvalue is 0. 
Thus, there is no such point when $B$ is definite, in particular, $B$ has the signature $(n-1,1)$. 
Hence, there is no element in $\widehat{Q}$ which is fixed by every $s_\alpha$ with $\alpha \in \Delta$. 
In other words, we have $\bigcap_{\alpha \in \Delta} \partial V_\alpha = \emptyset$. 
}\end{rem}

\begin{lem}\label{kotei}
Let $K \subset \widehat{Q}$ be a nonempty $W$-invariant subset of $\widehat{Q}$. 
Then for each $\alpha \in \Delta$, there is $x_\alpha \in K$ such that 
$x_\alpha \not= s_\alpha \cdot x_\alpha$. 
\end{lem}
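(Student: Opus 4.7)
The plan is to argue by contradiction. Fix $\alpha \in \Delta$ and suppose that $s_\alpha \cdot x = x$ for every $x \in K$. By the observation opening this section, this is equivalent to $B(x,\alpha)=0$ for every $x \in K$. I then want to exploit the $W$-invariance of $K$ to propagate this identity to all simple roots, which will produce a point of $K$ fixed by every $s_\beta$ and hence contradict Remark \ref{atarimae}.

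Pick any $x \in K$ (possible since $K \neq \emptyset$). For any $w \in W$, the point $w^{-1}\cdot x$ lies in $K$, so $s_\alpha$ fixes it. After clearing the positive normalization factor $|w^{-1}(x)|_1$ and using that the unnormalized action preserves $B$, this yields
\[
B(x, w(\alpha)) = 0 \qquad \text{for every } w \in W.
\]
Therefore $x$ is $B$-orthogonal to the entire orbit $W(\alpha)$.

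The remaining step is to show that $U := \mathrm{span}(W(\alpha))$ equals $V$. The subspace $U$ is $W$-invariant and contains $\alpha$; for every $\beta \in \Delta$ with $B(\alpha,\beta)\neq 0$, the identity $s_\beta(\alpha)=\alpha-2B(\alpha,\beta)\beta$ then forces $\beta \in U$. Iterating this step along the Coxeter diagram, which is connected by our irreducibility assumption, every simple root eventually lies in $U$, so $U = V$. Consequently $B(x,\beta)=0$ for all $\beta \in \Delta$, meaning that $x$ is fixed by every normalized simple reflection, which contradicts Remark \ref{atarimae}. I do not anticipate any real obstacle: the only substantive point of the argument is this spanning property of $W(\alpha)$, and it is a routine consequence of the irreducibility of $W$.
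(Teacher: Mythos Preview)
Your argument is correct. Both proofs rest on the same two ingredients---Remark \ref{atarimae} and the connectedness of the Coxeter graph---but they organise them differently. The paper starts from a point $x\in K$ not fixed by some $s_{\alpha_0}$ and walks \emph{the point} along a path $(\alpha_0,\dots,\alpha_k=\alpha)$ in the Coxeter graph: at each step it checks, via the formula $B(s_{\alpha_{i-1}}\cdot x_{i-1},\alpha_i)=\bigl(B(x_{i-1},\alpha_i)-2B(x_{i-1},\alpha_{i-1})B(\alpha_{i-1},\alpha_i)\bigr)/|s_{\alpha_{i-1}}(x_{i-1})|_1$, that either $x_{i-1}$ or $s_{\alpha_{i-1}}\cdot x_{i-1}$ is not fixed by $s_{\alpha_i}$, thereby producing an explicit $x_\alpha\in K$. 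You instead fix one $x\in K$, push the contradiction hypothesis through the $W$-invariance of $K$ to get $B(x,w(\alpha))=0$ for all $w$, and then walk \emph{the subspace} $U=\mathrm{span}(W(\alpha))$ through the graph via $s_\beta(\alpha)=\alpha-2B(\alpha,\beta)\beta$ to conclude $U=V$. Your route is a touch more conceptual (it isolates the standard fact that the $W$-orbit of any simple root spans $V$ when $W$ is irreducible), while the paper's is constructive. One minor quibble: the normalisation factor $|w^{-1}(x)|_1$ is merely nonzero, not a priori positive, but this does not affect your computation.
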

\begin{proof}
By Remark \ref{atarimae}, 
$K$ contains $x$ with $x \not= s_{\alpha_0} \cdot x$ for some $\alpha_0 \in \Delta$. 

Fix $\alpha \in \Delta$ arbitrarily. Since we assume that $W$ is irreducible, 
the Coxeter graph associated with $W$ is connected (cf. \cite[Section 2.2]{Hum90}). 
Hence there is a path from $\alpha_0$ to $\alpha$ in the Coxeter graph, that is to say, 
there is a sequence of simple roots $(\alpha_0,\alpha_1,\ldots,\alpha_k)$ such that 
$\alpha_k=\alpha$ and $B(\alpha_{i-1},\alpha_i) \not= 0$ for $i=1,\ldots,k$, 
where $k$ is some positive integer.

For $i=0$, by the above discussions, 
there is a point $x_0 \in K$ such that $x_0$ is not fixed by $s_{\alpha_0}$, 
i.e., $x_0 \not= s_{\alpha_0} \cdot x_0$. For $i=1$, we have $B(\alpha_0,\alpha_1) \not= 0$. 
On the other hand, since 
\begin{align*}
B(s_{\alpha_0} \cdot x_0, \alpha_1)=\frac{B(x_0-2B(x_0, \alpha_0)\alpha_0,\alpha_1)}{|s_{\alpha_0} (x_0)|_1} 
=\frac{B(x_0,\alpha_1)-2B(x_0,\alpha_0)B(\alpha_0,\alpha_1)}{|s_{\alpha_0} ( x_0 )|_1}, 
\end{align*}
if $B(x_0,\alpha_1) = 0$, then $B(s_{\alpha_0} \cdot x_0, \alpha_1) \not=0$ 
because $B(x_0,\alpha_0) \not=0$ and $B(\alpha_0,\alpha_1)\not=0$. 
Hence either $B(x_0,\alpha_1)$ or $B(s_{\alpha_0} \cdot x_0, \alpha_1)$ is nonzero. 
This means that either $x_0$ or $s_{\alpha_0} \cdot x_0$ is not fixed by $s_{\alpha_1}$. 
Let $x_1$ be such point. Here we remark that since $x_0 \in K$ and $K$ is $W$-invariant, 
we know that $s_{\alpha_0} \cdot x_0 \in K$, so $x_1$ belongs to $K$. 
Similarly, we obtain that either $x_1$ or $s_{\alpha_1} \cdot x_1$ is not fixed by 
$s_{\alpha_2}$. Let $x_2$ be such point. By repeating this consideration, 
we eventually obtain $x_k \in K$ such that $x_k$ is not fixed by $s_\alpha$, as required. 
\end{proof}

The following proposition plays a crucial role in the proof of Theorem \ref{kidou}. 

\begin{prop}\label{keyprop}
The set $E$ of accumulation points of normalized roots is a minimal $W$-invariant closed set. 
\end{prop}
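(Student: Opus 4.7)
My plan has two parts: show $E$ is itself $W$-invariant and closed, then establish that any nonempty $W$-invariant closed subset $K \subset E$ must equal $E$. For $W$-invariance: if $y = \lim \widehat{\rho_n}$ with $(\rho_n)$ an injective sequence of roots, then for $w \in W$ the sequence $(w(\rho_n))$ is again an injective sequence of roots, and since $\widehat{Q} \subset V_1 \setminus W(V_0)$ (Lemma \ref{zerodake}) the normalized action is continuous at $y$, giving $w \cdot y \in E$. Closedness follows from a standard diagonal extraction using compactness of $(\widehat{Q}, d_B)$.

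For minimality, fix $y \in E$; I will show $y \in K$. Write $y = \lim_n w_n \cdot \widehat{\alpha}$ for some $\alpha \in \Delta$ and some sequence $(w_n) \subset W$ such that the roots $w_n(\alpha)$ are pairwise distinct; by \cite[Theorem 2.7(i)]{hlr} applied to their depths, $||w_n(\widehat{\alpha})|_1| \to \infty$. Lemma \ref{kotei} supplies $x \in K$ with $B(x, \widehat{\alpha}) \neq 0$. The key identity, obtained from $W$-invariance of $B$ after rescaling by the normalization factors, is
\[ ||w_n(x)|_1| \cdot ||w_n(\widehat{\alpha})|_1| \cdot |B(w_n \cdot x, w_n \cdot \widehat{\alpha})| = |B(x, \widehat{\alpha})|. \]
If along a subsequence the product on the left diverges, then passing to a further subsequence so that $w_n \cdot x \to z \in \widehat{Q}$ (possible by compactness), one obtains $|B(z, y)| = 0$ and hence $z = y$ by Proposition \ref{hitoshii}(a), so $y = z \in \overline{K} = K$.

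The main case to address is when the product stays bounded; since $||w_n(\widehat{\alpha})|_1| \to \infty$ this forces $||w_n(x)|_1| \to 0$, meaning $x$ lies in a contracting direction for $(w_n)$. The remedy I would pursue is to replace $x$ with $s_\alpha \cdot x \in K$ (in $K$ by $W$-invariance, distinct from $x$ by choice, and satisfying $B(s_\alpha \cdot x, \widehat{\alpha}) \neq 0$ since $s_\alpha$ negates $B(\cdot,\alpha)$). A direct calculation from $s_\alpha(x) = x - 2B(x,\alpha)\alpha$ gives
\[ |w_n(s_\alpha(x))|_1 = |w_n(x)|_1 - 2B(x,\alpha)|w_n(\alpha)|_1, \]
whose magnitude diverges thanks to $|w_n(\alpha)|_1 \to \infty$ and $B(x,\alpha) \neq 0$. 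Thus $||w_n(s_\alpha \cdot x)|_1| \to \infty$, the analogous product for $s_\alpha \cdot x$ diverges, and the first case now applies to give $\lim w_n \cdot s_\alpha \cdot x = y \in K$. The main technical care needed is in organizing these two cases uniformly along subsequences so that one of them always produces a convergent orbit witnessing $y \in K$; this two-sided analysis—analogous to the north--south dynamics on boundaries of Kleinian group actions—is the technical heart of the proof.
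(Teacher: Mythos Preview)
Your argument is correct and takes a genuinely different route from the paper. The paper does not track a single limit point $y \in E$ and chase it with an orbit from $K$; instead it builds an auxiliary $W$-invariant set
\[
K' = \Bigl(\bigcup_{x,y \in K,\, x \neq y} L(x,y) \setminus W(V_0)\Bigr) \setminus \widehat{Q_-}
\]
of chords through pairs of points of $K$, proves geometrically (via a visibility argument) that $\overline{K'} \cap \widehat{Q} \subset K$, and then observes, using Lemma~\ref{kotei}, that each line $L(x, s_\alpha \cdot x)$ passes through $\widehat{\alpha}$. This places $\widehat{\Delta} \subset K'$, hence $\widehat{\Phi} \subset K'$ by $W$-invariance, hence $E \subset \overline{K'} \cap \widehat{Q} \subset K$. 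Your approach is instead the direct dynamical one: you exploit the identity $|B(w_n \cdot x, w_n \cdot \widehat{\alpha})| \cdot ||w_n(x)|_1| \cdot ||w_n(\widehat{\alpha})|_1| = |B(x,\widehat{\alpha})|$ together with a north--south dichotomy, switching from $x$ to $s_\alpha \cdot x$ when $x$ happens to lie in the contracting direction. The paper's chord construction packages everything into one convexity statement and avoids the case split, at the cost of the somewhat delicate verification that limits of chords meeting $\widehat{Q}$ land back in $K$; your route is more elementary and closer in spirit to the fixed-point computations already in Section~\ref{koteiten}. Both arguments in fact yield the slightly stronger conclusion that $E \subset K$ for every nonempty closed $W$-invariant $K \subset \widehat{Q}$, not merely $K \subset E$.
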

\begin{proof}
Let $K \subset \widehat{Q}$ be a $W$-invariant closed subset. We may show that $E \subset K$. 

Let $$K'=\left(\bigcup_{x,y \in K, x \not= y} L(x,y) \setminus W(V_0) \right) \setminus \widehat{Q_-}.$$ Thus $K'$ is also a $W$-invariant set. 
In what follows, we claim the inclusion $$\overline{K'} \cap \widehat{Q} \subset K,$$ 
where $\overline{K'}$ is the closure of $K'$ with respect to the Euclidean topology on $V$. 
Suppose, on the contrary, that $\overline{K'} \cap \widehat{Q} \not\subset K$, i.e., 
there is $p \in \overline{K'} \cap \widehat{Q}$ such that $p \in \widehat{Q} \setminus K$.
Then there is an open neighborhood of $p$ in $\widehat{Q}$ such that $U \subset \widehat{Q} \setminus K$ 
because $K$ is closed. Take a sequence $\{p_i\}$ in $K'$ converging to $p$. 
By the definition of $K'$, there are $x_i$ and $y_i$ in $K$ with $x_i \not= y_i$ such that $p_i \in L(x_i,y_i)$ for each $i$. 
We fix such $x_i$ and $y_i$ and assume that $x_i$ is visible from $p_i$ and $y_i$ is not. 
Let $v_i=\frac{1}{2}p_i+\frac{1}{2}x_i$. 
Then we have $B(v_i,v_i)=\frac{1}{4}(B(p_i,p_i)+2B(p_i,x_i))$. Since $p_i$ converges to $p \in \widehat{Q}$, $B(p_i,p_i)$ goes to 0. 
However, since $x_i$ never goes to $p$, we have $B(p_i,p_i) \leq |B(p_i,x_i)|$ for sufficiently large $i$. 
(Otherwise, by taking a subsequence $\{p_{j_k}\}$ of $\{p_i\}$ with $B(p_{j_k},p_{j_k}) > |B(p_{j_k},x_{j_k})|$ for each $k$, 
one has $0 \leq |B(p_{j_k},x_{j_k})| < B(p_{j_k},p_{j_k}) \rightarrow 0$, which means that $x_{j_k}$ goes to $p$, a contradiction.) 
Moreover, since $x_i$ never goes to $p$ again, one has $B(p_i,x_i)<0$ for sufficiently large $i$ 
by Proposition \ref{hitoshii}. Hence, for sufficiently large $i$, we have 
$$B(v_i,v_i)=\frac{1}{4}(B(p_i,p_i)+2B(p_i,x_i)) \leq \frac{1}{4}(|B(p_i,x_i)|+2B(p_i,x_i)) \leq \frac{B(p_i,x_i)}{4} \leq 0.$$
On the one hand, since $x_i$ is visible from $p_i$, each point in $L[x_i,p_i] \setminus \{x_i,p_i\}$ 
does not belong to $\widehat{Q} \cup \widehat{Q_-}$. In particular, $v_i \not\in \widehat{Q} \cup \widehat{Q_-}$. 
Thus, we have $B(v_i,v_i)>0$ for every $i$, a contradiction. Therefore, 
we conclude the desired inclusion $\overline{K'} \cap \widehat{Q} \subset K$.

For each $\alpha \in \Delta$, when we take $x \in K$ with $x \not= s_\alpha \cdot x$, 
$L(x,s_\alpha \cdot x)$ intersects with $\alpha$. Since $x$ and $s_\alpha \cdot x$ belong to $K$, 
$\alpha$ belongs to $K'$. 
By Lemma \ref{kotei}, we can take such an element of $K$ for every $\alpha \in \Delta$. 
Hence $\Delta \subset K'$. Since $K'$ is $W$-invariant, we also have $W \cdot \Delta = \widehat{\Phi} \subset K'$. 
Thus, the accumulation points of $W \cdot \Delta$, which is nothing but $E$, should be contained in 
$\overline{K'} \cap \widehat{Q} \subset K$. 
Therefore, we have $E \subset K$. 
\end{proof}

\begin{proof}[Proof of Theorem \ref{kidou}]
For any $x \in E$, it is obvious that $\overline{W \cdot x} \subset E$. 
Moreover, since $\overline{W \cdot x}$ is $W$-invariant closed set, from Proposition \ref{keyprop}, 
we also have $E \subset \overline{W \cdot x}$, as desired. 
\end{proof}


\section{Fixed points of the normalized action}\label{koteiten}

For $w \in W$, there exists a constant $C \geq 1$ such that for any $x \in Q$, we have 
\begin{align}\label{inequality}
C^{-1} ||x|_1| \le ||w(x)|_1| \le C ||x|_1|.
\end{align}
We may choose a constant $C$ such that $C$ is independent of the choice of $x$. 
In fact, from $Q \setminus \{{\bf 0}\} \subset V \setminus W(V_0)$, 
we can compute 
$$\sup_{x \in Q \setminus \{{\bf 0}\}}\left|\frac{|w(x)|_1}{|x|_1}\right|=
\sup_{x \in Q \setminus \{{\bf 0}\}}\left|\left|w\left(\frac{x}{|x|_1}\right)\right|_1\right|=
\sup_{y \in \widehat{Q}}||w(y)|_1|.$$ 
Since $\widehat{Q}$ is compact, there exists $y' \in \widehat{Q}$ such that 
$||w(y')|_1|=\sup_{y \in \widehat{Q}}||w(y)|_1|$. 
Similarly, there also exists $y'' \in \widehat{Q}$ such that $||w(y'')|_1|=\inf_{y \in \widehat{Q}}||w(y)|_1|$. 
Let $C=\max\left\{||w(y')|_1|, \frac{1}{||w(y'')|_1|}\right\}$. Thus $C \geq 1$ and 
the inequality \eqref{inequality} is satisfied. 

\begin{lem}\label{zureta}
	For $w \in W$ with infinite order and $x \in \widehat{Q}$,
	let $(w^{n_i}\cdot x)_{n_i}$ be a converging subsequence of $(w^n\cdot x)_n$ 
	to $y \in \widehat{Q}$.
	If $||w^{n_i}(x)|_1| \rightarrow \infty$, then for any $k \in \Z$
	the sequence $(w^{n_i+k} \cdot x)_{n_i}$ also converges to $y$.
\end{lem}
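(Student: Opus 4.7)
The plan is to first reduce the statement to showing that $y$ is a projective fixed point of $w$ under the normalized action, and then to verify this fixed-point property via a short computation based on the $W$-invariance of $B$.

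For the reduction, by linearity of $w^k$ on $V$ one has $w^{n_i+k}(x) = |w^{n_i}(x)|_1 \cdot w^k(w^{n_i} \cdot x)$, and since $|\cdot|_1$ is a linear functional this scalar cancels after normalization, giving
\[
w^{n_i+k} \cdot x \;=\; w^k \cdot (w^{n_i} \cdot x).
\]
Since $w^k$ acts as a homeomorphism on $\widehat{Q}$ (as recorded in the unnumbered proposition preceding Proposition \ref{hodai2}), continuity yields $w^{n_i+k} \cdot x \to w^k \cdot y$. The lemma is therefore equivalent to the assertion $w^k \cdot y = y$ for every $k \in \Z$, and by iterating $w$ and $w^{-1}$ it suffices to prove the case $k = 1$.

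To establish $w \cdot y = y$, I would work with the metric $d(a,b) = |B(a,b)|^{1/2}$ on $\widehat{Q}$ from Remark \ref{metorikku}, which induces the Euclidean topology by Lemma \ref{metric2}, and show that
\[
d(w^{n_i+1} \cdot x,\, w^{n_i} \cdot x) \;\longrightarrow\; 0.
\]
Combined with $w^{n_i} \cdot x \to y$ and $w^{n_i+1} \cdot x \to w \cdot y$ from the previous step, this forces $w \cdot y = y$. The crucial observation is that $W$-invariance of $B$ gives $B(w^{n_i+1}(x), w^{n_i}(x)) = B(w(x), x)$, a constant independent of $i$, so
\[
|B(w^{n_i+1} \cdot x,\, w^{n_i} \cdot x)| \;=\; \frac{|B(w(x), x)|}{||w^{n_i+1}(x)|_1| \cdot ||w^{n_i}(x)|_1|}.
\]
The hypothesis $||w^{n_i}(x)|_1| \to \infty$ together with the inequality \eqref{inequality} applied to the fixed element $w$ (which yields $||w^{n_i+1}(x)|_1| \geq C^{-1}||w^{n_i}(x)|_1|$) forces the denominator to infinity, so the squared distance tends to $0$.

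The main obstacle I anticipate is conceptual rather than technical: seeing that the escape hypothesis $||w^{n_i}(x)|_1| \to \infty$ is exactly what damps the otherwise fixed numerator $|B(w(x),x)|$ and thereby pins $y$ down as a projective fixed direction of $w$. The infinite-order assumption on $w$ enters only to make this escape condition non-vacuous; once the dampening observation is found, $W$-invariance of $B$ collapses the whole estimate to one line.
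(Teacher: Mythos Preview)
Your proof is correct and uses the same core computation as the paper: $W$-invariance of $B$ freezes the numerator at $|B(x,w^k(x))|$ while the escape hypothesis together with inequality \eqref{inequality} drives the denominator $||w^{n_i}(x)|_1|\cdot||w^{n_i+k}(x)|_1|$ to infinity, so that $|B(w^{n_i}\cdot x,\,w^{n_i+k}\cdot x)|\to 0$ and Proposition \ref{hitoshii} finishes. The only difference is organizational: the paper runs this estimate directly for each fixed $k$, whereas you first reduce to $k=1$ via continuity of the normalized action, which has the pleasant side effect of isolating the fixed-point property $w\cdot y=y$ (a fact the paper only extracts later, inside the proof of Proposition \ref{meidai}).
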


\begin{proof}
	Fix $k \in \Z$ arbitrarily. 
	By the remark above, we have a constant $C_k \ge 1$, which depends only on $k$, 
        so that for each $n \in \N$,
	$$
		{C_k}^{-1} ||w^n(x)|_1| \le ||w^{n+k}(x)|_1| \le C_k ||w^n(x)|_1|. 
	$$
	Then we see that
	\begin{align*}
		|B(w^{n_i}\cdot x,w^{n_i+k}\cdot x)| 
			&= \frac{|B(w^{n_i}(x),w^{n_i+k}(x))|}{||w^{n_i}(x)|_1|\cdot||w^{n_i+k}(x)|_1|}
			\ \\
			&= \frac{|B(x,w^{k}(x))|}{||w^{n_i}(x)|_1|\cdot||w^{n_i+k}(x)|_1|}
			\le C_k\frac{|B(x,w^{k}(x))|}{||w^{n_i}(x)|_1|^2}
			\rightarrow 0,
	\end{align*}
	as $n_i \rightarrow \infty$.
        By Proposition \ref{hitoshii}, we have the conclusion.
\end{proof}

For any $w \in W$, 
if there is a fixed point on $\widehat{Q}$ of the normalized action of $w$, 
then such point is an eigenvector of $w$ corresponding to a real eigenvalue. 

\begin{lem}\label{hutatsu}
	Let $w \in W$. 
	Suppose that $w$ has distinct eigenvectors $p, p'$ lying on $\widehat{Q}$,
	and let $\lambda,\lambda' \in \R$ be corresponding eigenvalues respectively. 
	Then $\lambda\lambda' = 1$.
\end{lem}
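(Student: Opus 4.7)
The plan is to exploit the fact that $W$ acts on $V$ by $B$-isometries (each $B$-reflection $s_\alpha$ preserves $B$, so every $w \in W$ does), combined with the nondegeneracy statement in Proposition \ref{hitoshii}(b).

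First, I would unpack what it means for $p$ and $p'$ to be eigenvectors on $\widehat{Q}$: we have $w(p) = \lambda p$ and $w(p') = \lambda' p'$ in $V$ (the ordinary linear action, not the normalized one), with $p, p' \in \widehat{Q}$ distinct. Note in passing that both $\lambda$ and $\lambda'$ are nonzero, since $w$ is invertible.

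Next, applying the $B$-invariance of the $W$-action, I would compute
\begin{equation*}
B(p, p') \;=\; B(w(p), w(p')) \;=\; B(\lambda p, \lambda' p') \;=\; \lambda \lambda' \, B(p, p').
\end{equation*}
Since $p, p' \in \widehat{Q}$ and $p \neq p'$ (as points of $\widehat{Q}$, hence as vectors in $V_1$), Proposition \ref{hitoshii}(b) gives $B(p, p') < 0$; in particular $B(p, p') \neq 0$. Dividing yields $\lambda \lambda' = 1$, which is the desired conclusion.

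There is essentially no obstacle here beyond invoking the two facts just mentioned; the only thing to be slightly careful about is confirming the hypothesis for Proposition \ref{hitoshii}(b) is indeed met, namely that the two eigenvectors, viewed as elements of $\widehat{Q}$, are genuinely distinct (which is part of the statement), and that the $W$-invariance of $B$ holds for the full group and not just for generators (which is immediate since it is preserved by each $s_\alpha$).
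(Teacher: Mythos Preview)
Your proof is correct and essentially identical to the paper's: compute $B(p,p') = B(w(p),w(p')) = \lambda\lambda' B(p,p')$ using $B$-invariance of the $W$-action, then invoke Proposition~\ref{hitoshii} to ensure $B(p,p') \neq 0$ and divide. The only difference is that you cite part~(b) of Proposition~\ref{hitoshii} (strict negativity) where the paper only needs part~(a) (nonvanishing), but this is immaterial.
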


\begin{proof}
	We see this by calculating directly;
	\begin{align*}
		B(p,p') = B(w(p),w(p')) = \lambda\lambda'B(p,p').
	\end{align*} 
	Since $p$ and $p'$ are distinct and sitting on $\widehat{Q}$, 
	we have $B(p,p') \not= 0$ by Proposition \ref{hitoshii}. Hence $\lambda\lambda' = 1$, as required. 
        \end{proof}

This lemma gives us the following observations about eigenvalues satisfying the condition 
``different from $\pm1$ and corresponding eigenvectors are contained in $\widehat{Q}$''.
\begin{itemize}
	\item
	There are at most two such eigenvalues.
	\item
	The intersection of $Q$ (not $\widehat{Q}$) and each eigenspace of such eigenvalue 
	is one dimensional.
	\item
	If such eigenvalue exists, 
	there are no eigenvectors in $\widehat{Q}$ corresponding to eigenvalues of $\pm 1$.
\end{itemize}

\begin{prop}\label{meidai}
	Let $w \in W$ with infinite order and take $x \in \widehat{Q}$ arbitrarily. 
	If $w$ has an eigenvector $p$ in $\widehat{Q}$ corresponding to the eigenvalue $|\lambda| > 1$,
	then $(w^n \cdot x)_n$ converges to $p$. In particular, $p$ lies in $E$.
\end{prop}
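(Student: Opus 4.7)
The plan is to exploit the ``hyperbolic isometry'' structure of $w$ on $(V,B)$. Since $w$ preserves the nondegenerate form $B$, its characteristic polynomial is self-reciprocal, so $\lambda^{-1}$ is also a real eigenvalue; pick a real eigenvector $p'$. From $B(p',p')=B(w(p'),w(p'))=\lambda^{-2}B(p',p')$ and $|\lambda|>1$ we conclude $B(p',p')=0$, hence $p' \in Q \setminus \{\mathbf{0}\}$; normalizing so that $|p'|_1=1$ we obtain $\widehat{p'} \in \widehat{Q}$.

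Next I would set up a $w$-invariant orthogonal splitting of $V$. Since $B(p,p)=B(p',p')=0$ and $B(p,p')\neq 0$ by Proposition \ref{hitoshii}(a), the plane $\Pi=\mathrm{span}(p,p')$ is $B$-nondegenerate of signature $(1,1)$, so $V=\Pi\oplus V^{0}$ with $V^{0}:=\Pi^{\perp_{B}}$ a $w$-invariant $(n-2)$-plane on which $B$ is positive-definite. In particular $w|_{V^{0}}$ is an orthogonal transformation of the Euclidean space $(V^{0},B|_{V^{0}})$, so the iterates $(w^{n}(q))_{n}$ are bounded for every $q \in V^{0}$, and therefore so is $(|w^{n}(q)|_{1})_{n}$.

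Decomposing $x=ap+bp'+q$ with $q \in V^{0}$, one then computes
$$w^{n}(x)=a\lambda^{n}p+b\lambda^{-n}p'+w^{n}(q),\qquad |w^{n}(x)|_{1}=a\lambda^{n}+b\lambda^{-n}+|w^{n}(q)|_{1}.$$
The isotropy condition $B(x,x)=0$ becomes $2ab\,B(p,p')+B(q,q)=0$, and positive-definiteness of $B|_{V^{0}}$ forces $q=\mathbf{0}$ whenever $ab=0$, so $x\in\{p,\widehat{p'}\}$ in that case. For every remaining $x \in \widehat{Q}$ we have $a\neq 0$, and dividing numerator and denominator by $\lambda^{n}$ immediately gives $w^{n}\cdot x=w^{n}(x)/|w^{n}(x)|_{1}\to p$, since the other summands become $O(\lambda^{-n})$.

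The sole genuine exception is the repelling fixed point $x=\widehat{p'}$, whose normalized orbit is constant; I would either restrict the statement to $x\neq\widehat{p'}$ or note that this single point is irrelevant for the ``in particular'' conclusion. To see $p \in E$, apply the convergence to $x=\widehat{\alpha}$ for any simple root $\alpha$ with $\widehat{\alpha}\neq\widehat{p'}$ (finitely many simple roots against a single excluded point): the sequence $(w^{n}\cdot\widehat{\alpha})_{n}$ consists of normalized roots, is injective (since $w$ has infinite order and $\widehat{\alpha}$ is not fixed), and converges to $p$. The main obstacle, and the step in which the signature $(n-1,1)$ hypothesis is essential, is establishing the clean splitting $V=\Pi\oplus V^{0}$ with $B|_{V^{0}}$ positive-definite, which makes the $V^{0}$-component of the orbit stay bounded and ensures the dominance of the $a\lambda^{n}p$ term.
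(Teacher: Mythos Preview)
Your argument is correct and takes a genuinely different route from the paper. The paper never writes down the $B$-orthogonal splitting $V=\Pi\oplus V^{0}$; instead it argues indirectly: it extracts a convergent subsequence $(w^{n_i}\cdot x)_{n_i}\to y$ by compactness of $\widehat{Q}$, shows via the identity
\[
|B(w^{n_i}\cdot x,p)|=\frac{|\lambda|^{-n_i}|B(x,p)|}{||w^{n_i}(x)|_1|}
\]
that $||w^{n_i}(x)|_1|\to\infty$, then invokes Lemma~\ref{zureta} to conclude that the shifted subsequence $(w^{n_i-1}\cdot x)$ has the same limit, which forces $w\cdot y=y$; finally it uses Lemma~\ref{hutatsu} to identify $y$ among the (at most two) normalized fixed points. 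Your approach trades this subsequence/fixed-point detection for an explicit linear-algebra decomposition, and the payoff is a cleaner, quantitative convergence (you actually see the rate $O(|\lambda|^{-n})$). The paper's approach, by contrast, stays within the metric framework $(\widehat{Q},|B(\cdot,\cdot)|^{1/2})$ it has already set up and does not need to analyse $w|_{V^{0}}$. Both arguments tacitly exclude the repelling fixed point $x=\widehat{p'}$; you are right to flag this.

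One small gap in your ``in particular'' clause: the convergence you proved is for $x\in\widehat{Q}$, but $\widehat{\alpha}\notin\widehat{Q}$ since $B(\alpha,\alpha)=1$, so you cannot literally ``apply the convergence to $x=\widehat{\alpha}$''. Your splitting argument does extend to arbitrary $x\in V_1$ provided the $p$-coefficient $a$ is nonzero; for $x=\widehat{\alpha}$ one computes $a=B(\widehat{\alpha},p')/B(p,p')$, so the correct hypothesis is $B(\alpha,p')\neq 0$, not $\widehat{\alpha}\neq\widehat{p'}$ (the latter is automatic). Such an $\alpha$ exists because $B$ is nondegenerate (equivalently, by Remark~\ref{atarimae}). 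With that adjustment the conclusion $p\in E$ follows as you indicate.
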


\begin{proof}
	Since $\widehat{Q}$ is compact, there exists a converging subsequence 
	$(w^{n_i}\cdot x)_{n_i}$ of $(w^n\cdot x)_n$.
	Let $y$ be the convergent point of the sequence above.
	
	Notice that $w$ has two eigenvectors in $\widehat{Q}$ when $\lambda \not= \pm 1$.
	In such case, we denote the other eigenvector by $p'$ 
	and the corresponding eigenvalue by $\lambda'$.
	By Lemma \ref{hutatsu}, $\lambda\lambda' = 1$ must be satisfied.

	We consider an eigenvalue $\lambda'$ with $|\lambda'| < 1$ and the corresponding eigenvector $p'$. 
	Then we have 
	\begin{align*}
		|B(w^{n_i}\cdot x,p')| 
			&= \frac{|B(w^{n_i}(x),p')|}{||w^{n_i}(x)|_1|} 
				= \frac{|B(x,w^{-n_i}(p'))|}{||w^{n_i}(x)|_1|}
			= \frac{|\lambda^{n_i}|\cdot|B(x,p')|}{||w^{n_i}(x)|_1|} 
	\end{align*}
	and $|B(w^{n_i}\cdot x, p')| \rightarrow |B(y,p')|$ as $n_i \rightarrow \infty$.
	Since $|\lambda^{n_i}| \rightarrow \infty$, we have $||w^{n_i}(x)|_1| \rightarrow \infty$.
	Now there exists a constant $C\ge 1$ so that
	$$
		C^{-1} ||x|_1| \le ||w(x)|_1| \le C ||x|_1|, 
	$$
	which is independent of $x$. Therefore, 
	\begin{align*}
		|B(w^{n_i}\cdot x,w\cdot y)| 
			&= \frac{|B(w^{n_i}(x),w(y))|}{||w^{n_i}(x)|_1|\cdot ||w(y)|_1|} 
				= \frac{|B(w^{n_i-1}(x),y)|}{||w^{n_i}(x)|_1|\cdot ||w(y)|_1|}\\
			&\leq \frac{C|B(w^{n_i-1}(x),y)|}{||w^{n_i-1}(x)|_1|\cdot ||w(y)|_1|}
                        =\frac{C}{||w(y)|_1|}|B(w^{n_i-1} \cdot x,y)|\rightarrow 0,	
	\end{align*}
	as $n_i \rightarrow \infty$ by Lemma \ref{zureta}.
	This implies that $y = w \cdot y$.
	Since we can apply this argument for each converging subsequence of $(w^n\cdot x)_n$ 
        and its convergent point, we can deduce that the convergent point, say, $y$, is fixed by $w$. 
	By Lemma \ref{hutatsu}, we have the following two possibilities:
	\begin{itemize}
		\item[1)]
			$w$ has only one fixed point in $\widehat{Q}$; 
		\item[2)]
			$w$ has two fixed points $p,p'$ in $\widehat{Q}$. 
	\end{itemize}
	If 1), then it is obvious that $y=p$. If 2), 
        since $|\lambda^{-1}| < 1$ and $||w^{n_i}(x)|_1| \rightarrow \infty$ as $n_i \rightarrow \infty$, one has 
       	\begin{align*}
		|B(w^{n_i}\cdot x,p)| 
			&= \frac{|B(w^{n_i}(x),p)|}{||w^{n_i}(x)|_1|} 
				= \frac{|B(x,w^{-n_i}(p))|}{||w^{n_i}(x)|_1|}\\
			&= \frac{|{\lambda}^{-n_i}|\cdot|B(x,p)|}{||w^{n_i}(x)|_1|} \rightarrow 0.
	\end{align*}
	In both cases 1) and 2), all converging subsequences of $(w^n \cdot x)_n$ converge to the same point. 
        Thus we have the conclusion.
\end{proof}

Let $w \in W$ and $x \in \widehat{Q}$ be elements satisfying the condition 
in the claim of Proposition \ref{meidai}. 
Then we have a converging sequence $(w^{n_i}\cdot x)_{n_i}$ to $y \in \widehat{Q}$ 
so that $|w^{n_i}(x)|_1 \rightarrow \infty$. 
Even in the case of $\lambda=\pm 1$, 
one has $$\left|B(w^{n_i} \cdot x, p)\right| = \left|\frac{B(w^{n_i}(x), p)}{|w^{n_i}(x)|_1}\right| 
= \left|\frac{B(x, p)}{|w^{n_i}(x)|_1}\right|.$$ 
This shows that if $||w^{n_i}(x)|_1| \rightarrow \infty$, 
then $|B(w^{n_i} \cdot x, p)|$ converges to 0. 
This means that $(w^{n_i} \cdot x)_{n_i}$ converges to $p$ by Proposition \ref{hitoshii} (a). 

\begin{rem}\label{chuui}{\em 
In the case where $W$ is rank 3, for $\delta_1,\delta_2 \in \Delta$, 
\begin{itemize}
\item if $B(\delta_1,\delta_2)<-1$, then there exist two real eigenvalues of 
$s_{\delta_1}s_{\delta_2}$ which are distinct from $\pm 1$; 
\item if $B(\delta_1,\delta_2)=-1$, then an easy calculation shows that 
$||(s_{\delta_1}s_{\delta_2})^n(v)|_1| \rightarrow \infty$ for any 
$v \in \widehat{Q} \setminus \{\frac{1}{2}\delta_1+\frac{1}{2}\delta_2\}$. 
\end{itemize}

}\end{rem}


\section{A proof of Theorem \ref{mein} : the case of rank 3}\label{syoumei}

We first prove Theorem \ref{mein} for the case of rank 3. 
Since we can handle this case more combinatorially than the case of higher ranks, 
we concentrate on this case in this section. 
Before proving, we remark that the case of rank 2 is self-evident. 
\begin{rem}\label{rima-ku}{\em 
We consider the case of rank 2. Let $\alpha, \beta$ be two simple roots. 
\begin{itemize}
\item If $B(\alpha,\beta)>-1$, then $\widehat{Q}$ is empty and so is $E$. 
\item If $B(\alpha,\beta)=-1$, then $E=L(\widehat{\alpha},\widehat{\beta}) \cap \widehat{Q}$ consists of one point. 
\item If $B(\alpha,\beta)<-1$, then $E=L(\widehat{\alpha},\widehat{\beta}) \cap \widehat{Q}$ consists of two points. 
\end{itemize}
For more details, see \cite[Example 2.1]{hlr}. 
}\end{rem}

Let $(W,S)$ be a Coxeter system of rank 3 with $S=\{s_\alpha, s_\beta, s_\gamma\}$ 
and $\Delta=\{\alpha, \beta, \gamma\}$ its simple system.

We fix one arbitrary accumulation point $\rho$ of orbits of normalized roots.

\begin{lem}\label{mouyamete1}
Let $\delta, \delta' \in \{\alpha,\beta,\gamma\}$ with $\delta \not= \delta'$ 
and assume that $B(\delta,\delta') \geq -1$. 
Then there exists a singleton $A_{\delta''} \subset E$, 
where $\delta'' \in \{\alpha,\beta,\gamma\} \setminus \{\delta, \delta'\}$, 
such that the point in $A_{\delta''}$ is visible from $\widehat{\delta}$ and $\widehat{\delta'}$ 
but not visible from $\widehat{\delta''}$. 
\end{lem}
\begin{proof}
When $B(\delta,\delta')=-1$, let $\eta=\frac{1}{2}\delta+\frac{1}{2}\delta'$. 
Then it is easy to see that $\widehat{\eta}=\lim_{n \rightarrow \infty} (s_\delta s_{\delta'})^n \cdot \rho \in E$. 
Moreover, we have $B(\widehat{\eta},\widehat{\delta}) = B(\widehat{\eta}, \widehat{\delta'})=0$. 
Thus $\widehat{\eta}$ is visible from $\widehat{\delta}$ and $\widehat{\delta'}$ by Lemma \ref{visibility} (i). 
In addition, by Remark \ref{atarimae}, $\widehat{\eta}$ is not visible from $\widehat{\delta''}$. 
Let $A_{\delta''}=\{\widehat{\eta}\}$. Then $A_{\delta''}$ satisfies the desired properties. 

Assume $B(\delta,\delta') > -1$. Then the order of $s_{\delta}s_{\delta'}$ is finite. 
Let $m$ be a positive integer such that $(s_\alpha s_\beta)^m=1$. 
Thus the order of the parabolic subgroup $W'$ generated by $s_\alpha$ and $s_\beta$ is $2m$. 
Let $T=W' \cdot \rho$ and let $T_\alpha \subset T$ (resp. $T_\beta \subset T$) 
be the set of the points in $T$ which are visible from $\alpha$ (resp. $\beta$). 
We prove that there is $\widehat{\eta} \in \widehat{Q}$ such that $\{\widehat{\eta}\}$ satisfies the required property. 

	Suppose that there does not exist $\widehat{\eta}$, i.e., $T_\alpha \cap T_\beta = \emptyset$. 
	Then $T_\alpha$ and $T_\beta$ have the same cardinality. 
	In fact, $s_\beta : T_\alpha \to T_\beta$ is well-defined by $T_\alpha \cap T_\beta = \emptyset$ and Lemma \ref{visibility} (ii). 
        Moreover, this is injective. Thus, one has $|T_\alpha| \leq |T_\beta|$. Similarly, $|T_\beta| \leq |T_\alpha|$. 
        Hence $|T_\alpha|=|T_\beta|$, denoted by $m'$. 
        Moreover, since $s_\alpha : T \setminus T_\alpha \to T_\alpha$ is injective, 
        one has $|T_\alpha| \geq |T \setminus T_\alpha|=|T|-|T_\alpha|$. Thus, $|T| \leq 2|T_\alpha|=|T_\alpha| + |T_\beta| \leq |T|$. 
	Hence $T \setminus (T_\alpha \cup T_\beta) =\emptyset$. 
	We write $T_\alpha = \{\rho_1,\ldots,\rho_{m'}\}$ and $T_\beta=\{\rho'_1,\ldots,\rho'_{m'}\}$. 
	Observe that $s_{\alpha}$ and $s_{\beta}$ act on $\{1,\ldots,m'\}$ as permutations 
	$\sigma_{\alpha},\sigma_{\beta} : \{1,\ldots,m'\} \rightarrow \{1,\ldots,m'\}$ 
	so that $s_\beta \cdot T_\alpha = \{\rho'_{\sigma_{\alpha}(1)},\ldots,\rho'_{\sigma_{\alpha}(m')}\}$ and 
	$s_\alpha \cdot T_\beta = \{\rho_{\sigma_{\beta}(1)},\ldots,\rho_{\sigma_{\beta}(m')}\}$.
	In particular, we recognize that each image of the points in $T_\beta$ by $s_{\alpha}$ 
	must be visible from $\alpha$ and vice versa.
	Moreover, the permutation $\sigma_{\alpha\beta} := \sigma_\alpha\sigma_\beta$ 
	has order $m'$. By Proposition \ref{hodai2} (i), each $B$-reflection extends the length of visible curves. 
        Thus we see that
	\begin{align*}
		d_B(\rho_{i},\rho_{j}) 
			&\ge d_B(\rho_{\sigma_{\alpha\beta}(i)},\rho_{\sigma_{\alpha\beta}(j)})\\
			&\ge d_B(\rho_{\sigma_{\alpha\beta}^2(i)},\rho_{\sigma_{\alpha\beta}^2(j)})\\
			&\ge \cdots \\
			&\ge d_B(\rho_{\sigma_{\alpha\beta}^{m'}(i)},\rho_{\sigma_{\alpha\beta}^{m'}(j)})
			 = d_B(\rho_{i},\rho_{j}),
	\end{align*}
	for any $i,j \in \{1,2,\ldots,m'\}$. Thus all the equalities of these inequalities must be satisfied. 
	Since $\rho_{\sigma_{\alpha\beta}(i)}=s_\alpha s_\beta \cdot \rho_i$, $d_B(\rho_i,\rho_j)=d_B(s_\beta \cdot \rho_i, s_\beta \cdot \rho_j)$ 
        also holds. From Proposition \ref{hodai2} (iii), $d_B(\rho_i,\rho_j)=d_B(s_\beta \cdot \rho_i, s_\beta \cdot \rho_j)$ 
        implies that $\rho_i$ and $\rho_j$ should belong to $\partial V_\beta \cap T \subset T_\beta$ if $\rho_i \not= \rho_j$, 
        while $\rho_i$ and $\rho_j$ belong to $T_\alpha$. 
        However, since $T_\alpha \cap T_\beta = \emptyset$, this cannot happen. Hence, $\rho_i=\rho_j$. 
        In particular, each of $T_\alpha$ and $T_\beta$ consists of one element. 
        Let $T_\alpha=\{\rho_1\}$ and $T_\beta=\{\rho_1'\}$. Then four points 
        $\alpha, \rho_1, \rho_1'$ and $\beta$ should lie in the same line $L(\alpha, \beta)$. 
        On the other hand, since $W'$ is finite, $L(\alpha,\beta)$ does not intersect with $\widehat{Q}$ 
        (see Remark \ref{rima-ku}), a contradiction. 
        Hence $T_\alpha \cap T_\beta$ is not empty. This means that $\widehat{\eta}$ exists in $W' \cdot \rho$. 
	Thus $\widehat{\eta} \in E$. 

Let $\widehat{\eta}=x_\alpha \widehat{\alpha} + x_\beta \widehat{\beta} + x_\gamma \widehat{\gamma}$. 
Since $\widehat{\eta} \in E \subset \mathrm{conv}(\widehat{\Delta})$, 
we have $0 \leq x_\alpha,x_\beta, x_\gamma \leq 1$ and $x_\alpha+x_\beta+x_\gamma=1$. Since 
$$B(\widehat{\eta},\widehat{\eta})=
x_\alpha B(\widehat{\eta},\widehat{\alpha})+x_\beta B(\widehat{\eta},\widehat{\beta})
+x_\gamma B(\widehat{\eta},\widehat{\gamma})=0, \; B(\widehat{\eta},\widehat{\alpha}) \geq 0 \text{ and }B(\widehat{\eta},\widehat{\beta}) \geq 0,$$ 
we have $B(\widehat{\eta},\widehat{\gamma}) < 0$ or 
$x_\alpha B(\widehat{\eta},\widehat{\alpha})=x_\beta B(\widehat{\eta},\widehat{\beta})=x_\gamma B(\widehat{\eta},\widehat{\gamma})=0$. 
Suppose that the latter case happens. Since $\widehat{\eta} \not= \widehat{\gamma}$, 
(a) both of $x_\alpha$ and $x_\beta$ are positive, 
or (b) either $x_\alpha$ or $x_\beta$ is 0 and the other is positive. 
\begin{itemize}
\item[(a)] Suppose that both of $x_\alpha$ and $x_\beta$ are positive. 
If $x_\gamma >0$, then we have $B(\widehat{\eta},\alpha)=B(\widehat{\eta},\beta)=B(\widehat{\eta},\gamma)=0$, a contradiction to Remark \ref{atarimae}. 
If $x_\gamma = 0$, then $\widehat{\eta} \in E \cap \mathrm{conv}(\{\widehat{\alpha},\widehat{\beta}\})$, 
a contradiction to the finiteness of $W'$. 
\item[(b)] Suppose that either $x_\alpha$ or $x_\beta$ is 0 and the other is positive, 
say, $x_\alpha>0$ and $x_\beta=0$. Then $x_\gamma > 0$ because of $\widehat{\eta} \not= \widehat{\alpha}$. 
Thus $\widehat{\eta} \in \mathrm{conv}(\{\widehat{\alpha},\widehat{\gamma}\})$. 
Since $$B(\widehat{\eta}, \beta)= x_\alpha B( \widehat{\alpha},\beta) + x_\gamma B(\widehat{\gamma},\beta) \geq 0,$$ 
we have $B(\alpha,\beta)=B(\gamma,\beta)=0$. This is a contradiction to our assumption ``irreducible''. 
\end{itemize}
\end{proof}

If $B(\delta,\delta') < -1$, then $\mathrm{conv}(\{\widehat{\delta},\widehat{\delta'}\})$ and $\widehat{Q}$ intersect 
at two points (Remark \ref{rima-ku}) and $\mathrm{conv}(\{\widehat{\delta},\widehat{\delta'}\})$ separates $\widehat{Q}$ 
into two components. Let $D$ be one of such open components with $D \cap \mathrm{conv}(\widehat{\Delta}) =\emptyset$.

\begin{lem}\label{mouyamete2}
Let $\delta, \delta' \in \{\alpha,\beta,\gamma\}$ with $\delta \not= \delta'$ and assume that $B(\delta,\delta')<-1$. 
Let $A_{\delta''} \subset \widehat{Q}$ be the closure of $D$, 
where $\delta'' \in \{\alpha,\beta,\gamma\} \setminus \{\delta, \delta'\}$ and $D$ is the set defined above. 
Then $A_{\delta''}$ satisfies the following: 
\begin{itemize}
\item the end points are contained in $E$; 
\item one end point of $A_{\delta''}$ is visible from $\widehat{\delta}$, the other is visible from $\widehat{\delta'}$ 
and $A_{\delta''}$ is not visible from $\widehat{\delta''}$. 
\end{itemize}
\end{lem}
\begin{proof}
The line joining a point in $A_{\delta''}$ and $\widehat{\delta''}$ always crosses $\widehat{Q} \setminus A_{\delta''}$. 
This means that $\text{int}(A_{\delta''})$ is not visible from $\widehat{\delta''}$, 
where $\text{int}(\cdot)$ denotes the interior relative to $\widehat{Q}$. 
By Remark \ref{rima-ku}, it follows that both two end points of $A_{\delta''}$ are contained in $E$ such that 
one end point of $A_{\delta''}$ is visible from $\widehat{\delta}$ and the other is visible from $\widehat{\delta'}$. 
Moreover, each of both two end points $\widehat{\eta}$ is contained in $\mathrm{conv}(\{\widehat{\delta},\widehat{\delta'}\})$, 
so it is written like $\widehat{\eta}=r\widehat{\delta}+(1-r)\widehat{\delta'}$, where $0 < r < 1$. Then it follows that 
$B(\widehat{\eta}, \widehat{\delta''})=rB(\widehat{\delta},\widehat{\delta''})+(1-r)B(\widehat{\delta'},\widehat{\delta''}) \leq 0$. 
Our assumption ``irreducible'' says that either $B(\widehat{\delta},\widehat{\delta''})$ 
or $B(\widehat{\delta'},\widehat{\delta''})$ is negative. 
Thus we obtain $B(\widehat{\eta}, \widehat{\delta''}) < 0$, i.e., 
$\widehat{\eta}$ is not visible from $\widehat{\delta''}$ by Lemma \ref{visibility} (i). 
Hence $A_{\delta''}$ satisfies the required property. 
\end{proof}

Now we prove the desired assertion. In the proof, we always use the relative topology of $\widehat{Q}$. 

\begin{proof}[Proof of Theorem \ref{mein} in the case of rank 3]
Fix three sets $A_\alpha, A_\beta$ and $A_\gamma$ in the statements of 
Lemma \ref{mouyamete1} and Lemma \ref{mouyamete2}. For our proof, we introduce the following notation. 
\begin{itemize}
\item Let $C_\gamma \subset \widehat{Q}$ 
be the connected closed set joining end points of $A_\alpha$ and $A_\beta$ satisfying $C_\gamma \subset V_\gamma$. 
Such a set $C_\gamma$ is uniquely determined. Similarly, we also define $C_\alpha$ and $C_\beta$. 
Note that the end points of each $C_\delta$ ($\delta \in \Delta$) are some points in $E$. 
\end{itemize}
Note that we have the equality 
\begin{align}\label{decomp}
\widehat{Q} = A_\alpha \sqcup A_\beta \sqcup A_\gamma \sqcup \mathrm{int}(C_\alpha) \sqcup \mathrm{int}(C_\beta) \sqcup \mathrm{int}(C_\gamma).
\end{align}

Suppose, on the contrary, that $E \subsetneq \widehat{Q} \setminus (\bigcup_{i=1}^m W \cdot D_i)$. 
Since $E$ is closed, there exists a connected open set $U_1 \subset (\widehat{Q} \setminus (\bigcup_{i=1}^m W \cdot D_i)) \setminus E$. 
Note that the boundary of $U_1$ consists of two points. 
By taking $U_1$ as a maximal one, we may assume that $\overline{U_1} \cap E$ is non-empty and consists of two points $a_1,b_1$ 
which are the end points of $\overline{U_1}$. 
Since the end points of $C_\alpha,C_\beta$ and $C_\gamma$ are contained in $E$, 
$\overline{U_1} \subset C_\alpha$ or $\overline{U_1} \subset C_\beta$ or $\overline{U_1} \subset C_\gamma$ occurs. 
Moreover, by $C_\delta \subset V_\delta$ for $\delta \in \{\alpha,\beta,\gamma\}$ and Proposition \ref{hodai2} (i), 
$\overline{U_1}$ is a geodesic. 

Let, say, $\overline{U_1} \subset C_\gamma$. Let $U_2 = s_\gamma \cdot U_1$ and let $a_2$ and $b_2$ be the end points of $U_2$. 
Then $U_2 \cap D_i=\emptyset$ for any $i$ by definition of $U_2=s_\gamma \cdot U_1$. Namely, $U_2 \cap A_\delta=\emptyset$ for any $\delta \in \{\alpha,\beta,\gamma\}$. 
By \eqref{decomp} together with $U_2=s_\gamma \cdot U_1 \not\subset C_\gamma \subset V_\gamma$, 
$U_2 \subset \mathrm{int}(C_\alpha)$ or $U_2 \subset \mathrm{int}(C_\beta)$ occurs. 
Moreover, $\overline{U_2} \cap E=\{a_2,b_2\}$. 
From Proposition \ref{hodai2} (i) and (iii), we notice that $\ell_B(U_1) < \ell_B(U_2)$.

Similarly, for each $n \geq 1$, if $U_n \subset \mathrm{int}(C_\delta)$ for some $\delta \in \{\alpha,\beta,\gamma\}$, 
then let $U_{n+1}=s_{\delta} \cdot U_n$ and let $a_{n+1}$ and $b_{n+1}$ be the end points of $\overline{U_{n+1}}$. 
Moreover, we also have $\ell_B(U_n) < \ell_B(U_{n+1})$. In particular, $U_i \not= U_j$ for any $i$ and $j$ with $i \not= j$. 
In addition, $\overline{U_n} \cap E = \{a_n,b_n\}$, where $a_n$ and $b_n$ are the end points of $\overline{U_n}$. 
If $U_i \cap U_j \not=\emptyset$ for some $i$ and $j$ with $i \not= j$, 
then an end point of $U_i$ belongs to $U_j$ or an end point of $U_j$ belongs to $U_i$. Since each end point of $U_i$ and $U_j$ 
is an element of $E$, we obtain $U_i \cap E \not=\emptyset$ or $U_j \cap E \not=\emptyset$, a contradiction. 
Hence $U_i \cap U_j = \emptyset$ for all $i$ and $j$ with $i \not= j$. 

Now it follows that one of $C_\alpha$, $C_\beta$ and $C_\gamma$, say, $C_\gamma$, 
contains infinitely many open sets $U_n$. Let $U_{i_k} \subset \text{int}(C_\gamma)$ 
for $k \geq 1$, where $i_1 < i_2 < i_3 < \cdots $. 
Since $U_{i_k} \cap U_{i_{k'}}=\emptyset$ for any $k \not= k'$, the disjoint union 
$\bigsqcup_{k=1}^\infty U_{i_k}$ is contained in $\text{int}(C_\gamma)$. 
On the one hand, we have $\ell_B(C_\gamma)<\infty$. 
On the other hand, since we have $0<\ell_B(U_{i_1}) < \ell_B(U_{i_2}) < \cdots $, 
we obtain that $\sum_{k=1}^\infty \ell_B(U_{i_k}) = \infty$, a contradiction.

Therefore, we conclude that $E=\widehat{Q} \setminus \bigcup_{i=1}^m W \cdot D_i$, as required. 
\end{proof}

\section{A proof of Theorem \ref{mein} : the case of an arbitrary rank}\label{higher}

Finally, in this section, 
we prove Theorem \ref{mein} for the case of an arbitrary rank. 

We divide the proof of Theorem \ref{mein} into the following two cases: 
\begin{itemize}
\item[{\bf (a)}] $\widehat{Q} \subset \text{int}(\text{conv}(\widehat{\Delta}))$; 
\item[{\bf (b)}] $\widehat{Q} \not\subset \text{int}(\text{conv}(\widehat{\Delta}))$. 
\end{itemize}
Here $\text{int}(\cdot)$ denotes the relative interior.

\subsection{The case (a)}

Since $|B(x-y,x-y)|^{\frac{1}{2}}$ is a metric on $\widehat{Q}$ (Remark \ref{metorikku}), 
for the proof of Theorem \ref{mein} in the case {\bf (a)}, we estimate $|B(x,y)|$ for $x,y \in \widehat{Q}$. 

\begin{lem}\label{hani}
There exists a constant $C'>0$ such that for any $x \in \widehat{Q}$, 
one has $B(x,\alpha) \geq C'$ for some $\alpha \in \Delta$. 
\end{lem}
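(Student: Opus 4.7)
The plan is to combine a compactness argument with the nondegeneracy of $B$. Define $f \colon \widehat{Q} \to \R$ by $f(x) = \max_{\alpha \in \Delta} B(x,\alpha)$. Since each $B(\,\cdot\,,\alpha)$ is a continuous linear functional, $f$ is continuous as a finite maximum of continuous functions, and since $\widehat{Q}$ is compact (see the paragraph following Lemma \ref{metric2}), $f$ attains its infimum at some $x^\ast \in \widehat{Q}$. Setting $C' := f(x^\ast)$, the lemma reduces to the pointwise claim $f(x^\ast) > 0$.

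For a contradiction, suppose $f(x^\ast) \le 0$, i.e.\ $B(x^\ast, \alpha) \le 0$ for every $\alpha \in \Delta$. The case (a) hypothesis $\widehat{Q} \subset \text{int}(\text{conv}(\Delta))$ says that $x^\ast$ expands in the basis $\Delta$ with strictly positive coefficients: $x^\ast = \sum_{i=1}^n c_i \alpha_i$ with $c_i > 0$. Then
\[
B(x^\ast, x^\ast) \;=\; \sum_{i=1}^n c_i\, B(x^\ast, \alpha_i) \;\le\; 0,
\]
and equality forces every summand, and hence every $B(x^\ast,\alpha_i)$, to vanish. Since $x^\ast \in \widehat{Q}$ gives $B(x^\ast,x^\ast)=0$, this equality case must occur, so $B(x^\ast, \alpha_i) = 0$ for all $i$.

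Because $\Delta$ is a linearly independent spanning set of $V$ (the standing assumption of the paper), the previous conclusion means $B(x^\ast, v) = 0$ for every $v \in V$, i.e.\ $x^\ast$ lies in the radical of $B$. But $B$ has signature $(n-1,1)$ and is therefore nondegenerate, so its radical is $\{\mathbf{0}\}$. Hence $x^\ast = \mathbf{0}$, contradicting $|x^\ast|_1 = 1$.

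The one delicate step is justifying the strict positivity $c_i > 0$ of the basis coefficients, and this is exactly the content of the case (a) hypothesis (``int'' rather than ``conv''). On the boundary of $\text{conv}(\Delta)$ some $c_i$ can vanish, and the implication ``$\sum c_i B(x^\ast,\alpha_i) \le 0$ and $=0 \Rightarrow$ each $B(x^\ast,\alpha_i) = 0$'' would no longer reach every $\alpha_i$; this is precisely why case (b) of Theorem \ref{mein} has to remove the $W$-orbits of the connected components $D_i$ of $\widehat{Q}$ lying outside $\text{conv}(\Delta)$, and will require a genuinely different treatment.
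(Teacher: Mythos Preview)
Your proof is correct and follows the same strategy as the paper's: reduce via compactness of $\widehat{Q}$ to the pointwise claim $\max_{\alpha\in\Delta} B(x,\alpha)>0$, expand $0=B(x,x)=\sum_i c_i\,B(x,\alpha_i)$ with all $c_i>0$, and use nondegeneracy to rule out $B(x,\alpha_i)\le 0$ for every $i$. The only cosmetic difference is that the paper phrases the contradiction through the visible-area cover of Proposition~\ref{cover} and appeals to Lemma~\ref{kotei}/Remark~\ref{atarimae} for the case $B(x,\alpha_i)=0$ for all $i$, whereas you invoke the radical of $B$ directly---your packaging is slightly more streamlined.
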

\begin{proof}
By Proposition \ref{cover}, 
$\widehat{Q}$ is covered by $\{V_\alpha : \alpha \in \Delta\}$. 
Note that $V_\alpha$ is a closed set. 
Since $\widehat{Q} \subset \text{int}(\text{conv}(\widehat{\Delta}))$, 
for any $y = \sum_{i=1}^n y_i \alpha_i \in \widehat{Q}$, one has $y_i >0$. 

Suppose that there is $x \in \widehat{Q}$ such that $x \not\in \bigcup_{\alpha \in \Delta} \text{int}(V_\alpha)$. 
This means from Proposition \ref{cover} that $x$ should belong to $\bigcap_{i=1}^k \partial V_{\alpha_{q_i}}$ 
for some $\alpha_{q_1},\ldots,\alpha_{q_k} \in \Delta$, where $k<n$ by Lemma \ref{kotei}. 
Hence $B(x, \alpha_{q_i})=0$ for $i=1,\ldots,k$ by Lemma \ref{visibility} (iii). 
Moreover, $B(x,\alpha')<0$ for all $\alpha' \in \Delta \setminus \{\alpha_{q_1},\ldots,\alpha_{q_k}\}$. 
On the other hand, when $x$ can be written like $x=\sum_{i=1}^n x_i \alpha_i$, 
one has $B(x,x)=0$ from $x \in Q$, while by $x \not\in \bigcup_{\alpha \in \Delta} \text{int}(V_\alpha)$, 
one has $B(\alpha,x)=\sum_{i=1}^n x_iB(\alpha,\alpha_i)<0$ 
for each $\alpha \in \Delta \setminus \{\alpha_{q_1},\ldots,\alpha_{q_k}\}$. In addition, 
one has $B(\alpha,x)=0$ for each $\alpha \in \{\alpha_{q_1},\ldots,\alpha_{q_k}\}$. Thus, we have  
\begin{align*}
B(x,x)=\sum_{1 \leq i,j \leq n} x_ix_jB(\alpha_i,\alpha_j)
=\sum_{i \in I}x_i\sum_{j=1}^n x_jB(\alpha_i,\alpha_j)<0, 
\end{align*}
where $I=\{1,\ldots,n\} \setminus \{q_1, \ldots, q_k\}$, a contradiction. 

Hence $x \in \text{int}(V_\alpha)$ for some $\alpha \in \Delta$. 
Since $B(x,\alpha)>0$ for each $x \in \text{int}(V_\alpha)$, we obtain 
$$\min_{x \in \widehat{Q}}\max_{\alpha \in \Delta}\{B(x,\alpha)\} > 0.$$
If we set $C'=\min_{x \in \widehat{Q}}\max_{\alpha \in \Delta}\{B(x,\alpha)\}$, 
then the assertion holds, as required. 
\end{proof}

Remark that the constant $C'$ appearing above depends only on $B$.

For $\kappa>0$ and $\alpha \in \Delta$, let $U_\alpha^\kappa = \{v \in \widehat{Q} : B(\alpha,v) > \kappa \}$. 
We fix $C=C'-\epsilon$ for a small $\epsilon > 0$ 
such as $\bigcup_{\alpha \in \Delta}U_\alpha^C$ covers $\widehat{Q}$. 
Note that Lemma \ref{hani} guarantees the existence of $C$. Let $U_\alpha=U_\alpha^C$. 
By Lemma \ref{visibility}, one has $U_\alpha \subset \text{int}(V_\alpha)$. 
Thus one can rephrase Lemma \ref{hani} as follows. 
\begin{cor}\label{shinsyuku}
The family of the regions $\{U_\alpha : \alpha \in \Delta\}$ covers $\widehat{Q}$. 
\end{cor}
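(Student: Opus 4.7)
The plan is to derive this as an essentially immediate corollary of Lemma \ref{hani} via a strict-versus-non-strict inequality argument. The constant $C$ was defined as $C' - \epsilon$ precisely so that the strict inequality $B(\alpha, v) > C$ defining $U_\alpha$ is weaker than the non-strict inequality $B(x, \alpha) \geq C'$ furnished by Lemma \ref{hani}, so the covering property should transfer directly.

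Concretely, I would first fix any $\epsilon \in (0, C')$, which ensures both $C > 0$ and $C < C'$. Then, given an arbitrary $x \in \widehat{Q}$, Lemma \ref{hani} supplies some $\alpha \in \Delta$ with $B(x, \alpha) \geq C'$. Combining these gives $B(\alpha, x) \geq C' > C$, so $x \in U_\alpha^C = U_\alpha$. Since $x \in \widehat{Q}$ was arbitrary, this yields $\widehat{Q} \subset \bigcup_{\alpha \in \Delta} U_\alpha$, as required.

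There is no real obstacle at this stage: all of the substantive work was already carried out in Lemma \ref{hani}, whose proof used that $\widehat{Q} \subset \mathrm{int}(\mathrm{conv}(\Delta))$ together with Proposition \ref{cover} to rule out simultaneous vanishing of $B(x, \alpha)$ over $\alpha \in \Delta$, then passed to the positive minimum via compactness of $\widehat{Q}$. Once a uniform positive lower bound $C'$ exists, the buffer $\epsilon$ built into the definition of $C$ makes Corollary \ref{shinsyuku} a definitional restatement; the only care required is to pick $\epsilon$ small enough that $C$ remains positive, which is what the phrase \emph{``for a small $\epsilon > 0$''} in the definition is arranging.
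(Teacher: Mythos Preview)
Your argument is correct and matches the paper's approach exactly: the paper simply says ``one can rephrase Lemma \ref{hani} as follows'' before stating the corollary, and your proof spells out that rephrasing. The only content is the observation $B(x,\alpha)\geq C' > C'-\epsilon = C$, which you have.
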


Let $T=\min_{\alpha \in \Delta}\frac{1}{1-2C|\alpha|_1}$. Then $T>1$ by Lemma \ref{1/2}. 

\begin{prop}\label{T}
For an arbitrary $x \in U_\alpha$ and $y \in V_\alpha$, we have 
\begin{itemize}
\item[(i)] $|B(s_\alpha \cdot x, s_\alpha \cdot y)| > T |B(x,y)|$; 
\item[(ii)] $|B(s_\alpha \cdot x, y)| \geq T |B(x,y)|$. 
\end{itemize}
\end{prop}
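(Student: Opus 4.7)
The plan is to reduce both parts to the $B$-invariance of the unnormalized reflection $s_\alpha$, coupled with the elementary computation $|s_\alpha(v)|_1 = 1 - 2B(\alpha,v)|\alpha|_1$ valid whenever $v \in V_1$. I would first unfold $s_\alpha \cdot x = s_\alpha(x)/|s_\alpha(x)|_1$ (and similarly for $y$) and apply bilinearity together with $B(s_\alpha u, s_\alpha v) = B(u,v)$ and $|y|_1=1$ to obtain
\[
|B(s_\alpha\cdot x,\, s_\alpha\cdot y)| = \frac{|B(x,y)|}{|s_\alpha(x)|_1\cdot|s_\alpha(y)|_1}, \qquad |B(s_\alpha \cdot x,\, y)| = \frac{|B(s_\alpha(x),\, y)|}{|s_\alpha(x)|_1}.
\]
These identities already appear implicitly in the computation in the proof of Proposition \ref{hodai2}.

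Next I would bound the denominators. The hypothesis $x \in U_\alpha$ gives $B(\alpha,x) > C$, hence $|s_\alpha(x)|_1 < 1 - 2C|\alpha|_1 \leq 1/T$ by the very definition of $T$. The hypothesis $y \in V_\alpha$ combined with Lemma \ref{visibility}(i) gives $B(\alpha,y) \geq 0$, so $|s_\alpha(y)|_1 \leq 1$; Lemma \ref{1/2} ensures $|s_\alpha(y)|_1 > 0$ so every division is legitimate. Substituting these estimates into the first identity directly yields (i).

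For (ii) the only non-mechanical step is to compare $|B(s_\alpha(x),y)|$ with $|B(x,y)|$. My approach is to expand $B(s_\alpha(x),y) = B(x,y) - 2B(\alpha,x)B(\alpha,y)$ and perform a sign analysis. By Proposition \ref{hitoshii}(b), $B(x,y) \leq 0$; under our hypotheses $B(\alpha,x) > C > 0$ and $B(\alpha,y) \geq 0$, so the correction term $-2B(\alpha,x)B(\alpha,y)$ is also non-positive. The two contributions reinforce rather than cancel, giving $|B(s_\alpha(x),y)| \geq |B(x,y)|$. Dividing by $|s_\alpha(x)|_1 < 1/T$ finishes (ii); the degenerate case $x=y$ simply reads $0 \geq 0$. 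The main (mild) obstacle of the whole proof is precisely this sign analysis: once one verifies that $B(x,y)$ and the cross term carry the same sign, the rest is bookkeeping with the normalization identities already established.
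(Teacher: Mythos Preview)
Your proof is correct and follows essentially the same route as the paper's: both parts rest on the normalization identity $|s_\alpha(v)|_1 = 1 - 2B(\alpha,v)|\alpha|_1$, the $B$-invariance of $s_\alpha$, and the bounds coming from $x\in U_\alpha$, $y\in V_\alpha$ together with Lemma~\ref{1/2}. The only cosmetic difference is in (ii): the paper factors $B(s_\alpha(x),y)=B(x,y)\bigl(1-2\frac{B(y,\alpha)}{B(x,y)}B(x,\alpha)\bigr)$ and checks the bracket is $\ge 1$, whereas you observe directly that $B(x,y)\le 0$ and $-2B(\alpha,x)B(\alpha,y)\le 0$ have the same sign --- the same computation, reorganized.
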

\begin{proof}
(i) By Lemma \ref{1/2} and the definition of $C$, one has $C<B(x,\alpha)<\frac{1}{2|\alpha|_1}$. 
Moreover, since $x \in U_\alpha \subset V_1$ and $y \in V_\alpha \subset V_1$, it follows that
$$\frac{1}{|s_\alpha(x)|_1}=\frac{1}{1-2B(x,\alpha)|\alpha|_1} > \frac{1}{1-2C|\alpha|_1} \geq T \text{ and }
\frac{1}{|s_\alpha(y)|_1}=\frac{1}{1-2B(y,\alpha)|\alpha|_1} \geq 1.$$ 
Thus, we obtain 
$$|B(s_\alpha \cdot x, s_\alpha \cdot y)|
=\frac{|B(s_\alpha(x),s_\alpha(y))|}{||s_\alpha (x)|_1 |s_\alpha(y)|_1|}
> T |B(s_\alpha(x),s_\alpha(y))| = T |B(x,y)|.$$

(ii) We have $B(x,y) \leq 0$ by Proposition \ref{hitoshii}. When $B(x,y)=0$, the assertion is obvious. 
Assume that $B(x,y)<0$. Since $B(x,\alpha)>0$ and $B(y,\alpha) \geq 0$, one has 
$$1 - 2 \frac{B(y,\alpha)}{B(x,y)}B(x,\alpha) \geq 1.$$
Hence 
$$|B(s_\alpha \cdot x, y)| =\left| \frac{1 - 2 \frac{B(y,\alpha)}{B(x,y)}B(x,\alpha)}{1 - 2B(x,\alpha)|\alpha|_1} \right| |B(x,y)| \geq T|B(x,y)|.$$
\end{proof}

\begin{proof}[Proof of Theorem \ref{mein} in the case {\bf (a)}] 
By \cite[Theorem 2.7]{hlr}, we know $E \subset \widehat{Q}$. 
What we must show is another inclusion $\widehat{Q} \subset E$. 

Fix $x \in \widehat{Q}$. For $x$, 
we choose an element $w_{x,m}=s_{\alpha_m} \cdots s_{\alpha_1} \in W$ of length $m$ as follows: 
\begin{itemize}
\item For $m=1$, write $w_{x,1}=s_{\alpha}$ for some $\alpha \in \Delta$ such that $x \in U_\alpha$. 
There is at least one such $\alpha$ by Corollary \ref{shinsyuku}. 
\item When we consider $w_{x,m-1} \cdot x$, there exists $\beta \in \Delta$ such that $w_{x,m-1} \cdot x \in U_\beta$. 
We set $w_{x,m}=s_\beta w_{x,m-1}$. 
\end{itemize}
Note that $w_{x,m}$ is not uniquely determined. Moreover, for each $i$, we have $w_{x,i}\not=1$. 
Indeed, by taking $x_i' \in U_\alpha$ such as $w_{x,j} \cdot x_i' \in U_{\alpha_{j+1}}$ for any $j < i$, 
we have $|B(w_{x,i} \cdot x, w_{x,i} \cdot x_i')| > |B(x,x_i')|$ by the proof of Proposition \ref{hodai2}. 

Now, one has $w_{x,m} \cdot x \in \widehat{Q} \setminus V_{\alpha_m}$. 
On the other hand, there exists $y_m$ such that $y_m \in E \cap (\widehat{Q} \setminus V_{\alpha_m})$. 
In fact, take $y \in E \not= \emptyset$. If $y \not\in V_{\alpha_m}$, then let $y_m=y$. 
If $y \in V_{\alpha_m}$, then let $y_m = s_{\alpha_m} \cdot y$. 
By definition of $w_{x,m}$, we see that $w_{x,m-1} \cdot x \in U_{\alpha_m}$. 
Moreover, $s_{\alpha_m} \cdot y_m \in V_{\alpha_m}$. If we set $y_{m-1} = s_{\alpha_m} \cdot y_m$, 
then $y_{m-1} \in E$. By Proposition \ref{T} (i), we see that 
$$|B(w_{x,m} \cdot x, y_m)| = |B(s_{\alpha_m} \cdot (w_{x,m-1} \cdot x), s_{\alpha_m} \cdot (s_{\alpha_m} \cdot y_m))|
\geq T|B(w_{x,m-1} \cdot x, y_{m-1})|.$$ Let 
\begin{align*}
y_{m-2}=
\begin{cases}
y_{m-1}, \;\;\;\;&\text{if }y_{m-1} \in V_{\alpha_{m-1}}, \\
s_{\alpha_{m-1}} \cdot y_{m-1}, \;\;\;\;&\text{if }y_{m-1} \not\in V_{\alpha_{m-1}}. 
\end{cases}
\end{align*}
By Proposition \ref{T} (ii) if $y_{m-2}=y_{m-1}$ and 
Proposition \ref{T} (i) if $y_{m-2}=s_{\alpha_{m-1}} \cdot y_{m-1}$, we obtain that 
$$|B(w_{x,m-1} \cdot x, y_{m-1})| \geq T|B(w_{x,m-2} \cdot x, y_{m-2})|.$$ 
By repeating this estimation, we conclude that $$|B(w_{x,m} \cdot x, y_m)| \geq T^m|B(x, y_0)|.$$
Let $M=\max_{u,v \in \widehat{Q}} |B(u,v)|$. Then $M$ is finite by the compactness of $\widehat{Q}$. 
Moreover, we have $M>0$ and $$0 \leq |B(x,y_0)| \leq \frac{M}{T^m}.$$ 
By taking a large $m$, one can find $y_0 \in E$ such that 
$|B(x,y_0)|$ is arbitrarily small. 
Therefore, we obtain that $x \in E$, as desired. 
\end{proof}

\bigskip

\subsection{The case (b)}

We prove the case {\bf (b)} by induction on the rank of Coxeter groups. 
The following remark guarantees that we can use the induction for the proof.

\begin{rem}\label{daiji}{\em 
For an arbitrary $\Delta_I \subset \Delta$, 
let $W_I$ be a parabolic subgroup of $W$ for $\Delta_I$ and let $B_I$ be the bilinear form associated to $W_I$. 
Then the signature of $B_I$ is $(m,0)$ or $(m-1,0)$ or $(m-1,1)$, where $m=|\Delta_I|$. 
In fact, since $B_I$ is a principal submatrix of $B$, 
the eigenvalues of $B_I$ interlace those of $B$ by \cite[Corollary 2.2]{haemers}. 

For $v \in \text{span}(\Delta_I)$, we define $| v |^I_1$ from $B_I$ in the same manner as $B$ in $V$ 
and consider $\text{span}(\Delta_I)_1=\{v \in \text{span}(\Delta_I) : |v|_1^I=1\}$. 
Then there exists a map $\phi_I$ from $\text{span}(\Delta_I)_1$ to $V_1 \cap \text{span}(\Delta_I)$ 
which is just the normalization $\widehat{\cdot}$ restricted to $\text{span}(\Delta_I)_1$. 
This is actually a homeomorphism. 
Via the map $\phi_I$, we can identify the sets $E \cap \text{span}(\Delta_I)$ 
and $\widehat{Q} \cap \text{span}(\Delta_I)$ with the corresponding sets defined by using $|\cdot|_1^I$. 
}\end{rem}

We divide the case {\bf (b)} into the following two cases: 
\begin{itemize}
\item[{\bf(b-1)}] $\widehat{Q} \not\subset \text{conv}(\widehat{\Delta})$; 
\item[{\bf(b-2)}] $\widehat{Q} \subset \text{conv}(\widehat{\Delta})$ and 
$\widehat{Q} \cap \partial \text{conv}(\widehat{\Delta}) \not=\emptyset$. 
\end{itemize}

For $\alpha \in \Delta$, 
let $\Delta_\alpha=\Delta \setminus \{\alpha\}$, $S_\alpha= S \setminus \{s_\alpha\}$ 
and let $W_\alpha$ denote the parabolic subgroup of $W$ generated by $S_\alpha$. 
When $\alpha=\alpha_j$, we denote $\Delta_j$, $S_j$ and $W_j$ instead of 
$\Delta_{\alpha_j}$, $S_{\alpha_j}$ and $W_{\alpha_j}$, respectively.

Now we see that $\widehat{Q} \cap \text{conv}(\widehat{\Delta}) \not= \emptyset$. 
In fact, we have $B(o,o)<0$ and $B(\widehat{\alpha},\widehat{\alpha})>0$ for all $\alpha \in \Delta$. 
Moreover, since $\widehat{Q}$ is connected, our assumption $\widehat{Q} \not\subset \text{conv}(\widehat{\Delta})$ 
implies that $\widehat{Q} \cap \partial \text{conv}(\widehat{\Delta}) \not= \emptyset$. 
Since $\partial \text{conv}(\widehat{\Delta}) = \bigcup_{j=1}^n \text{conv}(\widehat{\Delta_j})$, 
one has $\widehat{Q} \cap \text{conv}(\widehat{\Delta_j}) \not= \emptyset$ for some $j$'s. 

For $j=1,\ldots,n$, let $A_j=\text{conv}(\widehat{\Delta_j})$ and $H_j=\text{span}(\widehat{\Delta_j})$.

\begin{lem}\label{unun}
Assume $\widehat{Q} \not\subset \text{{\em conv}}(\widehat{\Delta})$. 
Then for a component $D$ of $\widehat{Q} \setminus \text{{\em conv}}(\widehat{\Delta})$, we have the following: 
\begin{itemize}
\item[(i)] $\partial(W \cdot D)$ is $W$-invariant, i.e., $W \cdot \partial(W \cdot D) = \partial(W \cdot D)$; 
\item[(ii)] $\partial D= \widehat{Q} \cap \bigcup_{\partial D \cap A_j \not= \emptyset} A_j$. 
\end{itemize}
\end{lem}
\begin{proof}
(i) Let $y \in W \cdot \partial( W \cdot D)$. Then $y=w \cdot z$ for some $w \in W$ and $z \in \partial( W \cdot D)$. 
Since $W$ acts on $\widehat{Q}$ as homeomorphisms, any neighborhood of $y$ can be expressed as an image by $w$ of some neighborhood of $z$. 
Let $O$ be a neighborhood of $y$ in $\widehat{Q}$. 
Since $w^{-1} \cdot y = z$ is contained in $\partial( W \cdot D)$, 
one has $w^{-1} \cdot O \cap W \cdot D \not= \emptyset$ and 
$w^{-1} \cdot O \cap (\widehat{Q} \setminus W \cdot D) \not= \emptyset$. 
Since $W \cdot D$ and $\widehat{Q} \setminus W \cdot D$ are $W$-invariant, 
one has $w\cdot(w^{-1} \cdot O \cap W \cdot D)= O \cap W \cdot D \not= \emptyset$ and 
$w \cdot (w^{-1} \cdot O \cap (\widehat{Q} \setminus W \cdot D)) =O \cap (\widehat{Q} \setminus W \cdot D) \not= \emptyset$. 
Thus $y$ should belong to $\partial(W \cdot D)$. 
Hence $W \cdot \partial( W \cdot D) \subset \partial( W \cdot D)$. 
On the other hand, the reverse inclusion is obvious. 
Thus $\partial (W \cdot D)$ is $W$-invariant. 

(ii) After some reordering of indices $1,\ldots,n$, we assume that $\partial D \cap A_j \not= \emptyset$ if and only if $1 \leq j \leq k$ for some $k$. 
The inclusion $\partial D \subset \widehat{Q} \cap \bigcup_{j=1}^k A_j$ is obvious. 
Moreover, since $\{v \in A_j : B(v,v) \leq 0\}$ is convex, a point in a segment joining two points of $\{v \in A_j : B(v,v) \leq 0\}$ 
is also a point in this set. This implies that $\{v \in A_j : B(v,v) \leq 0\}$ consists of a single component, 
i.e., $A_j$ does not intersect with any component except for $D$. 
Thus, we obtain $\widehat{Q} \cap A_j \subset \partial D$ for each $j=1,\ldots,k$. 
Hence one has $\partial D=\widehat{Q} \cap \bigcup_{j=1}^k A_j$, as required. 
\end{proof}

Let $D_1,\ldots,D_m$ denote the connected components of $\widehat{Q} \setminus \text{conv}(\widehat{\Delta})$. 
Then each $D_i$ is an open set with respect to the relative topology of $\widehat{Q}$. 

\begin{prop}\label{coco1}
Assume $\widehat{Q} \not\subset \text{{\em conv}}(\widehat{\Delta})$. 
Then one has $E=\bigcup_{i=1}^m \partial (W \cdot D_i)=\partial \bigcup_{i=1}^m W \cdot D_i$. 
\end{prop}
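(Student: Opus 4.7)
The plan is to prove Proposition \ref{coco1} by showing the set-theoretic identity $E = \widehat{Q} \setminus \bigcup_i W \cdot D_i$; combined with the openness of $\bigcup_i W \cdot D_i$, both displayed equalities follow at once. The key ingredients are the minimality of $E$ from Proposition \ref{keyprop}, the containment $\widehat{\Phi} \subset \mathrm{conv}(\widehat{\Delta})$, a norm-growth comparison between orbits of arbitrary points of $\widehat{Q}$ and orbits of normalized roots in the spirit of Lemma \ref{zureta}, and the inductive hypothesis on parabolic subgroups available via Remark \ref{daiji}.

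First, I would establish the disjointness $E \cap \bigcup_i W \cdot D_i = \emptyset$. Every positive root $\rho \in \Phi^+$ lies in $V^+$, so its normalization $\widehat{\rho}$ is a convex combination of $\{\widehat{\alpha_j}\}$; hence $\widehat{\Phi} \subset \mathrm{conv}(\widehat{\Delta})$ and $E \subseteq \mathrm{conv}(\widehat{\Delta}) \cap \widehat{Q}$, which is disjoint from every $D_i$. Since $\widehat{\Phi}$ is $W$-invariant and $W$ acts by homeomorphisms on $\widehat{Q}$, the set $E$ is $W$-invariant, so the disjointness extends to each orbit $W \cdot D_i$. Next, for any $q \in D_i$ the closure $\overline{W \cdot q}$ is a nonempty $W$-invariant closed subset of $\widehat{Q}$, so Proposition \ref{keyprop} gives $E \subseteq \overline{W \cdot q} \subseteq \overline{W \cdot D_i}$. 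Combined with the disjointness, $E \subseteq \partial(W \cdot D_i)$ for every $i$, and hence $E \subseteq \bigcup_i \partial(W \cdot D_i)$.

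The heart of the proof is the reverse inclusion $\bigcup_i \partial(W \cdot D_i) \subseteq E$. Given $y \in \partial(W \cdot D_i)$, I would choose a sequence $w_n \cdot q_n \to y$ with $q_n \in D_i$ and pass to a subsequence so that $q_n \to q \in \overline{D_i}$. If the $w_n$ take only finitely many values, a further subsequence makes them constant $w_n = w$, so $y = w \cdot q$; the hypothesis $y \notin W \cdot D_i$ forces $q \in \partial D_i$, which lies on a face $\widehat{Q} \cap \mathrm{span}(\Delta_J)$ for some proper $\Delta_J \subsetneq \Delta$, and the inductive hypothesis of Theorem \ref{mein} applied to $W_J$ via the identification of Remark \ref{daiji} places $q$ in $E_J \subseteq E$, so $y = w \cdot q \in E$. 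Otherwise the $w_n$ form an infinite family of distinct elements and one argues that $|w_n(q_n)|_1 \to \infty$; then for any simple root $\alpha$ with $B(q, \widehat{\alpha}) \ne 0$, a computation mirroring Lemma \ref{zureta} gives $|B(w_n \cdot \widehat{\alpha}, w_n \cdot q_n)| = |B(\widehat{\alpha}, q_n)|/(|w_n(\widehat{\alpha})|_1 \, |w_n(q_n)|_1) \to 0$, so Proposition \ref{hitoshii} yields $w_n \cdot \widehat{\alpha} \to y$, exhibiting $y$ as an accumulation point of normalized roots.

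Finally, the equality $\bigcup_i \partial(W \cdot D_i) = \partial \bigcup_i W \cdot D_i$ is a topological consequence of the above: the inclusion $\partial \bigcup_i W \cdot D_i \subseteq \bigcup_i \partial(W \cdot D_i)$ holds for any family of open sets, and the reverse follows because each $\partial(W \cdot D_i) \subseteq E$ is disjoint from the open union $\bigcup_j W \cdot D_j$ yet contained in its closure. The main obstacle is the case analysis in the third paragraph, particularly justifying the norm growth $|w_n(q_n)|_1 \to \infty$ for an infinite family of distinct $w_n$ and ensuring that $|w_n(\widehat{\alpha})|_1$ does not decay so fast as to spoil the comparison to the root orbit, as well as matching the boundary datum $q \in \partial D_i$ with the right lower-rank $W_J$-strata so the induction really gives $q \in E_J$.
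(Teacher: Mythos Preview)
Your overall plan agrees with the paper's: get $E\subseteq\partial(W\cdot D_i)$ from the minimality of $E$ (Proposition~\ref{keyprop}) and attack the reverse inclusion via induction on facet parabolics. The problems are all in that reverse inclusion, and here your argument has two real gaps. In your ``finite'' case you arrive at $q\in\partial D_i\subseteq\widehat{Q}\cap\mathrm{span}(\Delta_J)=\widehat{Q_J}$ and then assert that the inductive hypothesis ``places $q$ in $E_J$''. It need not: by induction $E_J=\widehat{Q_J}\setminus\bigcup_k W_J\cdot D^{(J)}_k$, so $q$ may lie in some $W_J$-translate of a lower-rank component, in which case $y=w\cdot q\in W\cdot D_{i'}$ and you have not shown this is incompatible with $y\in\partial(W\cdot D_i)$. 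In your ``infinite'' case, neither the divergence $|w_n(q_n)|_1\to\infty$ for pairwise distinct $w_n$ nor any lower control on $|w_n(\widehat{\alpha})|_1$ is established; the latter is genuinely delicate, since $w_n(\alpha)$ can be a root of small $|\cdot|_1$-norm even when $w_n$ is long.

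The paper circumvents both issues by a single contradiction argument rather than a case split. Assuming $x\in\partial(W\cdot D)$ but $x\notin E$, one takes an open neighbourhood $U$ of $x$ in $\widehat{Q}$ with $U\cap E=\emptyset$, uses $\partial(W\cdot D)\subset W\cdot\partial D$ to write $x=w\cdot q$ with $q\in\widehat{Q_j}$ for some facet $A_j$, and observes that $w^{-1}\cdot U\cap\widehat{Q_j}$ is then disjoint from $E_j$. The inductive description of $E_j$ now lets one move this entire slice by a single $w'\in W_j$ into $\widehat{Q_j}\setminus A_j\subset D$, whence $x\in W\cdot D$, contradicting $x\in\partial(W\cdot D)$. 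The crucial difference from your argument is that the induction is applied to an \emph{open set} missing $E$ rather than to a single point $q$; this is what makes the inductive step close, and it removes the need for any norm-growth estimate.
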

\begin{proof}
Fix an open component $D$ of $\widehat{Q} \setminus \text{conv}(\widehat{\Delta})$, i.e., let $D=D_j$ for some $1 \leq j \leq m$. 
By Lemma \ref{unun} (i), we know that $\partial(W \cdot D)$ is closed and $W$-invariant. 
Hence, by Proposition \ref{keyprop}, in order to prove $\partial (W \cdot D)= \bigcup_{i=1}^m \partial (W \cdot D_i)=E$, 
it suffices to show that $\partial (W \cdot D)$ is contained in $E$.

As in the proof of Lemma \ref{unun}, we assume that $\partial D \cap A_j \not= \emptyset$ if and only if $1 \leq j \leq k$ for some $k$. 
Let $\widehat{Q_j}=\{\widehat{v} : v \in H_j, B_j(v,v)=0\} \subset \widehat{Q}$, 
where $B_j$ is the Coxeter matrix associated with $W_j$. (Note that $B_j$ is a principal submatrix of $B$.) 
Then $\widehat{Q} \cap H_j=\widehat{Q_j}$ and $D \cap H_j = \widehat{Q_j} \setminus A_j$ for each $1 \leq j \leq k$. 
(Note that $\widehat{Q_j} \setminus A_j$ might be non-empty. In fact, if, 
for distinct $i,j$, the parabolic subgroup generated by $S \setminus \{s_{\alpha_i},s_{\alpha_j}\}$ is infinite, 
then $D \cap H_i \not= \emptyset$ and $D \cap H_j \not= \emptyset$.) 
By the inductive hypothesis and Remark \ref{daiji}, one has 
\begin{align}\label{newstyle}
E_j=\widehat{Q_j} \setminus (W_j \cdot (\widehat{Q_j} \setminus A_j)) \;\; \Longleftrightarrow \;\; 
E_j \cup (W_j \cdot (\widehat{Q_j} \setminus A_j)) = \widehat{Q_j}, 
\end{align}
where $E_j$ is the accumulation set of normalized roots of $W_j$. 

Let $x \in \partial(W \cdot D) $. Suppose that $x \not\in E$. 
Since $E$ is a closed set, there exists an open neighborhood $U$ of $x$ in $\widehat{Q}$ such that $U \cap E = \emptyset$. 
Moreover, since $\partial ( W \cdot D) \subset W \cdot \partial D$, 
one has $w \cdot x \in \partial D=\widehat{Q} \cap \bigcup_{j=1}^k A_j$ for some $w \in W$. 
Thus $w \cdot x \in \widehat{Q} \cap A_j$ for some $j \in \{1,\ldots,k\}$. 
Since $(w \cdot U) \cap E_j \subset (w \cdot U) \cap E=\emptyset$, we have $(w \cdot U) \cap E_j= \emptyset$. 
Hence, by the inductive hypothesis \eqref{newstyle} and the equality $\widehat{Q} \cap H_j = \widehat{Q_j}$, 
we have $(w \cdot U) \cap A_j \subset W_j \cdot (\widehat{Q_j} \setminus A_j)$, 
i.e., there exists $w' \in W_j$ such that $w' \cdot ((w \cdot U) \cap A_j) \subset \widehat{Q_j} \setminus A_j =D \cap H_j \subset D$. 
Since $w \cdot x \in w \cdot U \cap A_j$ and $W_j \cdot (\widehat{Q} \setminus A_j) \subset W \cdot D$ 
by $D \cap H_j = \widehat{Q_j} \setminus A_j$, we have $w \cdot x \in W \cdot D$ and therefore $x \in W \cdot D$, 
but this contradicts the assumption $x \in \partial (W \cdot D)$ since $W \cdot D$ is now open. 
Therefore, $x \in E$, i.e., 
\begin{align}\label{mineyama}
\partial (W \cdot D) \subset E. 
\end{align}

Finally, by Lemma \ref{tigauseibun} below, 
we also have $\bigcup_{i=1}^m \partial(W \cdot D_i)=\partial \bigcup_{i=1}^m W \cdot D_i$. 
\end{proof}

\begin{lem}\label{tigauseibun}
	Let $D$ and $D'$ be different components of $\widehat{Q} \setminus \mathrm{conv}(\widehat{\Delta})$.
	Then 
	\[	W \cdot D \cap D' = \emptyset. 	\]
\end{lem}

For the proof of Lemma \ref{tigauseibun}, we prepare two lemmas. 

\begin{lem}\label{hamidashi}
	Let $D$ be a component of $\widehat{Q} \setminus \mathrm{conv}(\widehat{\Delta})$.
	Then a set 
	\[
		K_D = \{x \in D\ :\ B(x,\alpha)<0\ \text{for\ any\ $\alpha \in \Delta$} \}
	\]
	is non-empty.
\end{lem}

Note that for each point $x \in V_1$, we can write $x = \sum_{i=1}^n x_i \widehat{\alpha_i}$. 
Then every point $x = \sum_{i=1}^n x_i \widehat{\alpha_i}$ in a component $D$ of $\widehat{Q} \setminus \mathrm{conv}(\widehat{\Delta})$ 
has at least one index $j$ such that $x_j<0$. Let $J_D \subset \{1,\ldots,n\}$ denote the set of such indices. 
Conversely, if $x_j<0$ for some $j$, then there exists a component $D$ in $\widehat{Q} \setminus \mathrm{conv}(\widehat{\Delta})$. 
By the convexity of $\widehat{Q_-}$, such a component is uniquely determined for $j$. 
Hence, for different components $D$ and $D'$, we have $J_D \cap J_{D'} = \emptyset$.

\begin{proof}[Proof of Lemma \ref{hamidashi}]
	Take $j \in J_D$. For $t \in \mathbb{R}$, let $H(t)$ be an affine subspace in $V_1$ which is parallel to $H_j$ 
        whose $j$-th coordinate is equal to $t$, i.e., 
        $H(t)=\{x \in V_1 : x=\sum_{i=1}^n x_i\widehat{\alpha_i}, \; x_j=t\}$. 
        Note that $H_j = H(0)$ and $\widehat{\alpha_j} \in H(1)$. 
        Since $H(0)$ intersects with $\widehat{Q_-}$, there is $t_0 < 0$ such that $H(t_0)$ is tangent to $\widehat{Q}$. 
	Such a tangent point is unique on $H(t_0)$. Let $x \in H(t_0) \cap \widehat{Q}$ be the tangent point.
	Then each segment connecting $x$ and $\widehat{\alpha}$ ($\alpha \in \Delta$) should intersect with $\widehat{Q_-}$. 
	This implies that $x$ is not visible from $\widehat{\alpha}$. 
        Hence, by Lemma \ref{visibility} (i), we have $B(x,\alpha)<0$ for any $\alpha \in \Delta$. 
	Moreover, since the $j$-th coordinate of $x$ is negative, we have $x \in D$ and thus, $x \in K_D$. 
\end{proof}

\begin{lem}\label{kidounofugou}
	Let $D$ be a component of $\widehat{Q} \setminus \mathrm{conv}(\widehat{\Delta})$.
	Assume that $\ell(ws_\alpha) > \ell(w)$ for $w \in W$ and $\alpha \in \Delta$, 
        where $\ell$ denotes the word length on $(W,S)$. 
	For any $x \in K_D$ and $i \in \{1,\ldots,n\}$, 
	if $(w \cdot x)_i > 0$ then $(ws_\alpha \cdot x)_i > 0$, 
        where $y_i$ denotes the $i$-th coordinate of $y$ when we write $y=\sum_{i=1}^n y_i\widehat{\alpha_i}$ for $y \in V_1$. 
\end{lem}
\begin{proof}
	First, we claim that $|w(x)|_1 > 0$ for any $w \in W$ by induction on $\ell(w)$. 
        Clearly, $|x|_1 =1$ by $x \in V_1$. 
        Now, \cite[Proposition 4.2.5.(i)]{BB05} says that for all $w' \in W$ and $s \in S$, 
        if $\ell(w's) > \ell(w')$ then $w'(\alpha_s)_i > 0$ for any $i$. 
	From this fact together with $B(x,\beta) < 0$ for any $\beta \in \Delta$, 
        we obtain that $|w'(s(x))|_1 = |w'(x)|_1 - 2B(x,\alpha_s)|w'(\alpha_s)|_1 > 0$ by the inductive hypothesis. 
        
	Thus the sign of $(w\cdot x)_i$ is equal to the sign of $w(x)_i$.
	Note that a reflection $s_\alpha \in S$ changes the value of 
	the $i$-th coordinate if and only if $\alpha = \alpha_i$.
	Let $\alpha \neq \alpha_i$. Then the sign of the $i$-th coordinate is preserved.
	Let $\alpha = \alpha_i$. 
	Since $-2B(x,\alpha) > 0$, if $w(x)_i > 0$, then one has $w(s_\alpha(x))_i > 0$ by \cite[Proposition 4.2.5.(i)]{BB05} again. 
\end{proof}

Note that for any $w \in W$, there exists a reduced expression $w=s_1s_2 \cdots s_q$, where $s_i \in S$, 
such that $\ell(s_1 \cdots s_i)>\ell(s_1 \cdots s_{i-1})$ for any $i=1,\ldots,q$. 
It follows from this fact and Lemma \ref{kidounofugou} that for any $x \in K_D$, if $x_j>0$, then $(w \cdot x)_j>0$. 

\begin{proof}[Proof of Lemma \ref{tigauseibun}]
	Suppose that there exists $w \in W$ such that $w \cdot D \cap D' \neq 0$. 
	Let $C$ (resp. $C'$) be the component of $W \cdot D$ (resp. $W \cdot D'$) with $w\cdot D \subset C$ (resp. $D' \subset C'$). 
	Since $C \cap C' \neq \emptyset$, we see that $C = C'$. This shows that $D' \subset W \cdot D$.
	Thus, for $x' \in K_{D'} \subset D'$, 
	there exist $x \in D$ and $w \in W$ so that $x' = w \cdot x$, namely, $x=w^{-1} \cdot x'$. 
	Let $j \in J_{D}$. On the one hand, one has $x_j<0$ by definition of $J_D$. 
        On the other hand, one also has $(w^{-1} \cdot x')_j>0$ by Lemma \ref{kidounofugou} and $j \not\in J_{D'}$, a contradiction.
\end{proof}

\begin{lem}\label{lem:1}
	For a $W$-invariant set $G \subset \widehat{Q} \cap \mathrm{conv}(\widehat{\Delta})$ with non-empty interior, 
	there exists $x \in \mathrm{int}(G)$ such that for any $\alpha \in \Delta$ we have the following:
	\[
		\begin{cases}
		\ x \in V_\alpha &\Longrightarrow x \in \partial V_\alpha,\\
		\ x \notin V_\alpha &\Longrightarrow x_\alpha = 0,
		\end{cases}
	\] 
	where $x_\alpha$ denotes the $\alpha$-th coordinate of $x$. 
\end{lem}

\begin{proof}
	By our assumption, there exist $x \in \overline{G}$ and $y \in \partial G$ such that 
	$|B(x,y)| = \max_{v \in \overline{G}} \min_{u \in \partial G} |B(u, v)| > 0$.
	Then $x$ should belong to $\mathrm{int}(G)$ from Proposition \ref{hitoshii} (a). 
	By Proposition \ref{cover}, there is $\alpha \in \Delta$ such that $x \in V_\alpha$. 
	
	Suppose that $y \notin V_\alpha$. 
	Then $s_\alpha \cdot y \in \partial G \cap \mathrm{int}(V_\alpha)$. 
	Let $z := s_\alpha \cdot y$. 
	Then one has $0\le B(x,\alpha)$, $0< B(z,\alpha )< \frac{1}{2|\alpha|_1}$
	and $B(x, z) < 0$ by Lemma \ref{visibility}, Lemma \ref{1/2} 
	and Proposition \ref{hitoshii}, respectively. 
	Thus $0 < 1-2B(z,\alpha)|\alpha|_1 < 1$. 
	Hence we see that
	\begin{align}
		|B(x,y)| = |B(x, s_\alpha \cdot z)| 
				&= \left| \frac{B(x, z) - 2B(x,\alpha)B(z,\alpha)}{1-2B(z,\alpha)|\alpha|_1} \right| \notag\\
				&= \left| \frac{1-2\frac{B(x,\alpha)}{B(x,z)}B(z,\alpha)}{1-2B(z,\alpha)|\alpha|_1} \right|
						 |B(x,z)| \label{eq:2}\\
				&> |B(x, s_\alpha \cdot y)| \notag.
	\end{align}
	However, this is a contradiction to $|B(x,y)| = \min_{u \in \partial G} |B(x,u)|$. 
	Hence, $y$ should belong to $V_\alpha$. 
	
	Moreover, suppose that $x \notin \partial V_\alpha$. 
	Since $G$ and $\partial G$ are $W$-invariant, one has $s_\alpha \cdot x \in G$ and $s_\alpha \cdot y \in \partial G$.
	Since $|B(x, y)| < |B(s_\alpha \cdot x,s_\alpha \cdot y)|$ by Proposition \ref{hitoshii} (c), 
	from the maximality of $|B(x, y)|$, there is $z' \in \partial G \setminus \{s_\alpha \cdot y\}$ such that 
	$|B(s_\alpha \cdot x, z')| = \min_{u \in \partial G} |B(s_\alpha \cdot x, u)| \le |B(x, y)|$. 
	If $z' \in V_\alpha$, then we obtain that $|B(x,z')| < |B(s_\alpha \cdot x, z')|$ by the similar calculation to \eqref{eq:2}. 
        If $z' \not\in V_\alpha$, then we have $|B(x,s_\alpha \cdot z')| < |B(s_\alpha \cdot x, z')|$ by Proposition \ref{hitoshii} (c). 
        In both cases, we have a contradiction to the choice of $y$, i.e., $|B(x, y)| = \min_{u \in \partial G} |B(x, u)|$. 
	Hence, $x$ should belong to $\partial V_\alpha$.
	
	Therefore, for each $\alpha \in \Delta$, if $x \in V_\alpha$, then $x \in \partial V_\alpha$. 
	This implies that $B(x, \widehat{\alpha}) = 0$ if $x \in V_\alpha$. 
	Moreover, since $x \in \mathrm{conv}(\widehat{\Delta})$, 
	$x$ can be written like $x = \sum_{\delta \in \Delta}x_\delta \widehat{\delta}$,
	with $x_\delta \ge 0$ for each $\delta \in \Delta$. 
	On the other hand, we have 
	\[
		0 = B(x,x) = B\left(x,\sum_{\delta \in \Delta}x_\delta \widehat{\delta}\right) 
				   = \sum_{\delta \in \{\beta\in \Delta : x \notin V_\beta \}} x_\delta B(x, \widehat{\delta}).
	\]
	Since $B(x,\widehat{\delta}) < 0$ for each $\delta \in \Delta$ such that $x \notin V_\delta$ by Lemma \ref{visibility}, 
	we have $x_\delta =0$ for such $\delta$.
\end{proof}

\begin{prop}\label{prop:last}
	For a $W$-invariant subset $K \subset \widehat{Q}$, 
	if $\widehat{Q} \setminus \mathrm{int}(\mathrm{conv}(\widehat{\Delta})) \neq \emptyset$ and 
	$\widehat{Q} \setminus \mathrm{int}(\mathrm{conv}(\widehat{\Delta})) \subset K$, 
        then $\overline{K} = \widehat{Q}$. 
\end{prop}

\begin{proof}
	Let $G = \widehat{Q} \setminus K$.
	If $G$ has no interior then we have the conclusion.
	Thus it suffices to consider the case where $G$ has interior points.
	Note that $G$ is $W$-invariant. 
        
        Since $G \subset \widehat{Q} \cap \mathrm{conv}(\widehat{\Delta})$,
	by Lemma \ref{lem:1}, there exists $x \in \text{int}(G)$ which satisfies that if $x \not\in V_\alpha$  
	the $\alpha$-th coordinate of $x$ equals to $0$, otherwise, $x \in \partial V_\alpha$.
	However, the assumption $\widehat{Q} \setminus \mathrm{int}(\mathrm{conv}(\widehat{\Delta})) \subset K$
	actually implies that $G \subset \mathrm{int}(\mathrm{conv}(\widehat{\Delta}))$. 
	Hence $x_\alpha \neq 0$ for all $\alpha \in \Delta$. 
	Thus $x$ should belong to $\partial V_\alpha$ for all $\alpha \in \Delta$. 
	This contradicts to Remark \ref{atarimae}.
\end{proof}

\begin{proof}[Proof of Theorem \ref{mein} in the case {\bf (b)}]
	Recall that $\widehat{Q} \setminus \mathrm{int}(\mathrm{conv}(\widehat{\Delta})) \neq \emptyset$ in this case.  
	
	\begin{description}
	\item[(b-1)]
	We set $K = \bigcup_{i=1}^m \overline{W \cdot D_i}$.
	Then $K$ obviously contains $\widehat{Q} \setminus \mathrm{int}(\mathrm{conv}(\widehat{\Delta}))$ and is $W$-invariant. 
	Therefore we have $\overline{K}=K= \widehat{Q}$ by applying Proposition \ref{prop:last} to $K$.
	In addition, Proposition \ref{coco1} says that $E = \partial K$. 
	Thus the conclusion follows. 
	
	\item[(b-2)]
	Let $K' = \widehat{Q} \setminus \mathrm{int}(\mathrm{conv}(\widehat{\Delta}))$ and consider $K = W \cdot K'$.
	Then we can also apply Proposition \ref{prop:last} to this $K$ and we get $\overline{K}= \widehat{Q}$.
	
	In this case, for each $x \in K'$, 
        there is $j \in \{1,\ldots,n\}$ such that $x \in \widehat{Q} \cap A_j$. 
	By the inductive hypothesis, $x$ is the accumulation point of normalized roots of $W_j$. Thus $K' \subset E$. 
        Since $E$ is a minimal $W$-invariant subset of $\widehat{Q}$ by Proposition \ref{keyprop}, we have $\overline{K} = E$. 
	Hence we are done. 
\end{description}
\end{proof}

\bigskip

Finally, we conclude this paper with the following remark. 
\begin{rem}\label{kiwotukeru}{\em 
We see that Theorems \ref{mein} (a) and \ref{kidou} imply Conjecture \ref{yosou}. 

\begin{itemize}
\item We first discuss Conjecture \ref{yosou} (i). 
For a Coxeter group $W$ of rank $n$ whose Coxeter matrix is of type $(n-1,1)$, 
as mentioned in Remark \ref{daiji}, 
every bilinear form associated with a parabolic subgroup of rank $m$ is 
of positive type or has the signature $(m-1,1)$.

If $\Delta_I$ is generating, then we can apply Theorem \ref{mein} (a) to $W_I$. 
By using the correspondence induced from $\phi_I$, which is defined in Remark \ref{daiji}, we obtain the conclusion. 
\item For Conjecture \ref{yosou} (ii), 
from the definition of ``generating'' and Theorem \ref{mein} (a), it follows that 
$\overline{F_0}$ is contained in $E$. Moreover, when we take $x \in \widehat{Q} \cap \text{span}(\Delta_I)=E_I \subset E$, 
where $\Delta_I$ is generating, 
it is obvious that $\overline{W \cdot x} \subset \overline{F_0}$. 
Furthermore, by Theorem \ref{kidou}, we know that $E= \overline{W \cdot x}$. Hence, 
$$E = \overline{W \cdot x} \subset \overline{F_0} \subset E.$$ 
Therefore, we conclude that $E=\overline{F_0}$. 
\end{itemize}
}\end{rem}



\begin{thebibliography}{9999}{\footnotesize{
\bibitem{BB05} A. Bj\"orner and F. Brenti. {\it Combinatorics of Coxeter groups}, volume 231 of Graduate Texts in Mathematics. Springer, New York, 2005.
\bibitem{BD10} C. Bonnaf\'e and M. Dyer. {\it Semidirect product decomposition of Coxeter groups}, Comm. Algebra, 38(4):1549-1574 (2010). 
\bibitem{dhr} M. Dyer, C. Hohlweg, and V. Ripoll, {\it Imaginary cones and limit roots of infinite Coxeter groups}, Math Z., 284(3-4): 715--780 (2016). 
\bibitem{gromov} M. Gromov, {\it Metric Structures for Riemannian and Non-Riemannian Spaces}, English ed. Modern Birkhauser Classics, Birkhauser Boston Inc., Boston, MA, 2007. 
\bibitem{haemers} W. H. Haemers, {\it Interlacing Eigenvalues and Graphs}, Linear Algebra Appl. 226/229 (1995) 593--616. 
\bibitem{hlr} C. Hohlweg, J-P. Labb\'e, and V. Ripoll, {\it Asymptotical behaviour of roots of infinite Coxeter groups}, Canad. J. Math. 66(2): 323--353 (2014). 
\bibitem{Hum90} J. E. Humphreys. {\it Reflection groups and Coxeter groups}, volume 29 of Cambridge Studies in Advanced Mathematics. Cambridge University Press, Cambridge, 1990. 
\bibitem{Kra09} D. Krammer. {\it The conjugacy problem for Coxeter groups.} Groups Geom. Dyn., 3(1): 71-171 (2009). 
}}
\end{thebibliography}
\end{document}